\begin{document}
\hinfo{XX}{x}{2019}{1}{\lpage}{10.15672/HJMS.xx}
%

\markboth{A. Babaee, B. Mashayekhy, H. Mirebrahimi, H. Torabi, M. Abdullahi Rashid, S.Z. Pashaei}{On Topological Homotopy Groups and Relation to Hawaiian Groups}

\title{On Topological Homotopy Groups and Relation to Hawaiian Groups}

\author{Ameneh Babaee,  Behrooz Mashayekhy, Hanieh Mirebrahimi\coraut, Hamid Torabi, Mahdi Abdullahi Rashid, Seyyed Zeynal Pashaei}

\address{Department of Pure Mathematics, Center of Excellence in Analysis on Algebraic Structures, Ferdowsi University of
Mashhad,\\ P.O.Box 1159-91775, Mashhad, Iran.}
\emails{am.babaee@mail.um.ac.ir (A. Babaee), bmashf@um.ac.ir (B. Mashayekhy), (h\_mirebrahimi@um.ac.ir) H. Mirebrahimi, h.torabi@um.ac.ir (H. Torabi), mbinev@mail.um.ac.ir (M. Abdullahi Rashid), Pashaei.seyyedzeynal@stu.um.ac.ir (S.Z. Pashaei)}
\maketitle
\begin{abstract}
By generalizing the whisker topology on the $n$th homotopy group of pointed space $(X, x_0)$, denoted by $\pi_n^{wh}(X, x_0)$, we show that $\pi_n^{wh}(X, x_0)$ is a topological group if $n \ge 2$. Also, we present some necessary and sufficient conditions for $\pi_n^{wh}(X,x_0)$ to be discrete, Hausdorff and indiscrete. Then we prove that $L_n(X,x_0)$ the natural epimorphic image of the Hawaiian group $\mathcal{H}_n(X, x_0)$ is equal to the set of all classes of convergent sequences to the identity in $\pi_n^{wh}(X, x_0)$. As a consequence, we show that $L_n(X, x_0) \cong L_n(Y, y_0)$ if $\pi_n^{wh}(X, x_0) \cong \pi_n^{wh}(Y, y_0)$, but the converse does not hold in general, except for some conditions. Also, we show that on some classes of spaces such as semilocally $n$-simply connected spaces and $n$-Hawaiian like spaces, the whisker topology and the topology induced by the compact-open topology of $n$-loop space coincide. Finally, we show that $n$-SLT paths can transfer $\pi_n^{wh}$ and hence $L_n$ isomorphically along its points.
\end{abstract}
\subjclass{55Q05, 55Q20, 55P65, 55Q52}  
\keywords{Whisker topology, Hawaiian group, $n$-dimensional Hawaiian earring, Harmonic archipelago}        

\hinfoo{DD.MM.YYYY}{DD.MM.YYYY} 


\section{Introduction and Motivation}\label{sec1}
E.H. Spanier introduced a topology on the fundamental group \cite[Theorem 13]{Spa}, named whisker topology by N. Brodskiy et al. \cite{BroDyd}. It is originally defined on a quotient of the path space introduced in \cite[Definition 4.2]{BroDyd} including the fundamental group as a fibre. It was shown that for a pointed space $(X, x_0)$ the restriction of the whisker topology on $\pi_1(X, x_0)$ is generated by the basis $\bigcup_{[\alpha] \in \pi_1(X,x_0)}$ $\{[\alpha] \pi_1(i) \pi_1(U,x_0)$ $|\ U$  is an open neighbourhood of $x_0$ and $i:U \to X$ is the inclusion map$\}$.

Another topology on the fundamental group was defined in \cite{BroDyd2}, called lasso topology. In general, the lasso topology makes the fundamental group a topological group, but not the whisker topology. As an example, if $\mathbb{HE}^1$ denotes the $1$-dimensional Hawaiian earring, the inverse operation of $\pi_1^{wh}(\mathbb{HE}^1, \theta)$ is not continuous \cite{BroDyd2}. Also, if $\pi_1^{qtop}$ denotes the fundamental group equipped with the topology induced by compact-open topology of $1$-loop space, then the multiplication of $\pi_1^{qtop} ( \mathbb{HE}^1)$  is not continuous \cite{Fab}.
This topology was generalized to higher dimension by F.H. Ghane et al. \cite{Ghane} induced by the compact-open topology of $n$-loop space.

In Section 2, we generalize the whisker topology to the $n$th homotopy group, $n \in \mathbb{N}$, denoted by $\pi_n^{wh}(X, x_0)$, using subgroup topology which makes $\pi_n(X, x_0)$ a left topological group for any pointed space $(X, x_0)$. We show that for $n \ge 2$, the whisker topology  makes  $\pi_n(X, x_0)$ a topological group.

In Section 3, we establish some necessary and sufficient conditions  for  $\pi_n^{wh}(X, x_0)$ to be discrete, Hausdorff, and indiscrete. For instance, an equivalent condition for $\pi_n^{wh} (X, x_0)$ to be discrete, is semi-locally $n$-simply connectedness at $x_0$. Also, we show that any subgroup $H \leqslant \pi_n^{wh}(X, x_0)$ is closed if and only if $X$ is $n$-homotopically Hausdorff relative to $H$ at $x_0$.

It is well-known that a path induces an isomorphism on homotopy groups at its beginning and end points. But this isomorphism is not necessarily continuous. Brodskiy et al. \cite[Proposition 4.10]{BroDyd} showed that the $1$--dimensional Hawaiian earring is a path connected space with non-homeomorphic fundamental groups equipped with the whisker topology at some different points. Moreover, they defined a kind of path, called an SLT-path, which makes the induced isomorphism on fundamental groups continuous.
We generalize SLT-paths to $n$-SLT paths in order to induce continuous isomorphism on the $n$th homotopy groups.

Section 4 discusses the relation between topological homotopy groups and Hawaiian groups.
For $n \ge 1$, the $n$th Hawaiian group was defined as a functor from $hTop_*$, the pointed homotopy category, to $Groups$, the category of groups (see \cite{KarRep}). Assume that $\mathbb{HE}^n = \bigcup_{k \in \mathbb{N}} \mathbb{S}_k^n$ denotes the $n$-dimensional Hawaiian earring  introduced in \cite{EdaKaw}, where $\mathbb{S}_k^n$ is the $n$-sphere  with radius $1/k$ centered at $(1/k, 0, \ldots, 0)$ in $\mathbb{R}^{n +1}$, and $\theta$ denotes the origin.

\begin{definition}[\cite{KarRep}]
Let $(X, x_0)$ be a pointed space, and let $[\cdot]$ denote the class of pointed homotopy. The $n$th Hawaiian group of $(X, x_0)$, is defined by $\mathcal{H}_n(X, x_0)= $ $\{[f]:\  f:(\mathbb{HE}^n , \theta) \to (X, x_0)\}$.
For any $[f], [g] \in \mathcal{H}_n(X, x_0)$, multiplication is induced by $(f*g)|_{\mathbb{S}_k^n} = f|_{\mathbb{S}_k^n} * g|_{\mathbb{S}_k^n}$ ($k \in \mathbb{N}$).
\end{definition}

The operation of the $n$th Hawaiian group implies that for all $n \in \mathbb{N}$, the following map

\begin{equation}
{\varphi}:{\mathcal{H}}_n(X,x_0) \to    \prod_{\aleph_0}{\pi}_n(X,x_0), \tag{I}
\end{equation}
defined by $\varphi([f])=([f{\mid}_{\mathbb{S}_1^n}], [f{\mid}_{\mathbb{S}_2^n}],... )$ is a homomorphism.
For every pointed space $(X, x_0)$, homomorphic image $im( \varphi)$ denoted by $L_n(X,x_0)$ which is equal to a special subset of $ \prod_{\aleph_0}{\pi}_n(X,x_0)$ \cite[Definition 2.6]{1} as follows.

\begin{definition}[\cite{1}]
Let $(X,x_0)$ be a pointed space and $n\geq 1$. Then $L_n(X,x_0)$ is the subset  of ${\prod}_{\aleph_0}\pi_n(X,x_0)$ consisting of all sequences of homotopy classes $\{[f_k]\}$, whenever $im(f_k)\subseteq U$ for all $k\in \mathbb{N}$ except a finite number, if $U$ is an open set containing $x_0$.
\end{definition}
For instance, if $X$ is a metric space, then $L_n(X, x_0)$ is the subset of $\prod_{\aleph_0} \pi_n(X, x_0)$ consisting of all classes of uniform convergent sequences to the constant map at $x_0$.

It was proved that $L_n(X,x_0) = \varphi ( \mathcal{H}_n(X, x_0))$, and hence it is a subgroup of ${\prod}_{\aleph_0}\pi_n(X,x_0)$ (see \cite[Theorem 2.7]{1}). Therefore, one can consider the homomorphism $\varphi$ as an epimorphism from $\mathcal{H}_n(X, x_0)$ onto $L_n(X, x_0)$.

In Section 4, we attend the relation of $L_n(X, x_0)$ and $\pi_n^{wh}(X, x_0)$, for any pointed space $(X, x_0)$, and we see that they are closely dependent on each other. In fact, it is shown that $L_n(X, x_0)$ is equal to the set of all convergent sequences to the identity in $\pi_n^{wh}(X,x_0)$. As a consequence, we see that on $n$-Hawaiian like spaces, the two topologies of $\pi_n^{wh}$ and $\pi_n^{qtop}$ coincide. Then, we prove that $L_n(X, x_0) \cong L_n(Y, y_0)$, whenever $\pi_n^{wh}(X, x_0) \cong \pi_n^{wh}(Y, y_0)$ as left topological groups, for any pointed spaces $(X, x_0)$ and $(Y, y_0)$. It implies a sufficient condition to fix the structure of $L_n$ at different points which is the existence of some two sided small $n$-loop transfer ($n$-SLT) path. Finally, we study two groups $L_1 (\mathbb{HA}, a)$ and $L_1( \mathbb{HA}, \theta)$, where $\mathbb{HA}$ is the harmonic archipelago, $\theta$ is the origin, and $a$ is another point. We prove that $L_1 (\mathbb{HA}, a) \not \cong L_1( \mathbb{HA}, \theta)$ to see that the existence of $n$-SLT paths is necessary to induce isomorphism on $L_n$ and topological homotopy groups at different points.

Throughout this article all homotopies are relative to the base point.

\section{Whisker Topology on Homotopy Groups}\label{sec2}
In this section, we intend to introduce the whisker topology on the $n$th homotopy groups. The whisker topology on the fundamental group has been introduced and discussed by Brodskiy et al. in \cite{BroDyd}.

Let $(X, x_0)$ be a pointed space, and let $n \ge 1$. For each open neighbourhood $U$ of $x_0$ in $X$, the inclusion map $i:U \to X$ induces the natural homomorphism $\pi_n(i) : \pi_n(U,x_0) \rightarrow \pi_n(X,x_0)$. Hence, $\pi_n(i) \big(\pi_n(U, x_0)\big)$ is a subgroup of $\pi_n(X,x_0)$. Also, for any open neighbourhoods $U$ and $V$ containing $x_0$, we have
\begin{equation}\label{eq2.1}
\pi_n(i_1) \big(\pi_n(U \cap V,x_0)\big) \leq \pi_n(i_2) \big(\pi_n(U,x_0)\big) \cap \pi_n(i_3)\big(\pi_n(V, x_0) \big),
\end{equation}
where $i_1$, $i_2$, and $i_3$ are corresponding inclusion maps.
Therefore, the collection of all such subgroups forms a \textit{neighbourhood family} on $\pi_n(X,x_0)$ which is defined as follows.

\begin{definition}[\cite{BogSie}]\label{de2.1}
Let $G$ be a group with the identity element $e$. A nonempty family $\Sigma$ of subgroups of $G$ is called a neighbourhood family whenever for any $S, S' \in \Sigma$, there exists $S'' \in \Sigma$, such that $S'' \subseteq S \cap S'$. Let $g \in G$ and $\Sigma$ be a neighbourhood family, then the set of all cosets $\{gS:\ S \in \Sigma\}$ forms a local basis at $g$. Thus, the set $\{gS:\ g \in G, S \in \Sigma\}$ is a basis for a topology on $G$ which is called a subgroup topology.
The intersection $S_{\Sigma} = \bigcap_{S\in \Sigma} S$ is called the infinitesimal subgroup for the neighbourhood family $\Sigma$.
\end{definition}
Using the above definition, we are going to endow the $n$th homotopy group with a topology called whisker topology. The whisker topology on the fundamental group has been defined as a subspace of a path space introduced in \cite{BroDyd}. Note that one can consider the fundamental group as the $1$st homotopy group.

\begin{definition}
Let $(X, x_0)$ be a pointed space, and $n \ge 1$. By Inequality \eqref{eq2.1},
\[
\Sigma = \{ \pi_n(i) \pi_n(U,x_0) \ |  \mathrm{\ U\ is\ an\  open\  subset\ of\ } X  \mathrm{\ containing\ } x_0 \},
\]
is a neighbourhood family on $\pi_n(X, x_0)$.
The whisker topology on the $n$th homotopy group, $\pi_n(X,x_0)$, of a pointed topological space $(X,x_0) $ is the subgroup topology determined by the neighbourhood family $\Sigma$ which is denoted by $\pi^{wh}_n(X,x_0)$.
\end{definition}
Note that for any $n$-loop $\alpha$ the collection $\Sigma_{[\alpha]} = \{ [\alpha] \pi_n(i) \pi_n(U,x_0)  |\  U$ is an open subset of $X$  containing $x_0\}$ is a local basis at $[\alpha] \in \pi_n^{wh}(X, x_0)$. Then we have the following result.

\begin{lemma}\label{le2.3}
Let $(X, x_0)$ be a pointed space, and let $n \ge 1$. If $X$ has a countable local basis at $x_0$, then $\pi_n^{wh}(X, x_0)$ is first countable.
\end{lemma}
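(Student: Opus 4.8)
The plan is to show directly that every point of $\pi_n^{wh}(X,x_0)$ admits a countable local basis, which is exactly the statement of first countability. Recall from the remark preceding the lemma that for any $n$-loop $\alpha$ the family $\Sigma_{[\alpha]} = \{[\alpha]\pi_n(i)\pi_n(U,x_0) \mid U \text{ an open neighbourhood of } x_0\}$ is a local basis at $[\alpha]$. So the task reduces to extracting, out of this (possibly uncountable) local basis, a countable subfamily that is still cofinal under reverse inclusion.

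By hypothesis $X$ has a countable local basis $\{U_k\}_{k \in \mathbb{N}}$ at $x_0$. First I would replace it by the nested family $V_k = U_1 \cap \cdots \cap U_k$, which is again a countable local basis at $x_0$ satisfying $V_1 \supseteq V_2 \supseteq \cdots$. The claim is then that for each $[\alpha]$ the countable family $\{[\alpha]\pi_n(i)\pi_n(V_k,x_0)\}_{k \in \mathbb{N}}$ is a local basis at $[\alpha]$. To verify this, I would take an arbitrary basic neighbourhood $[\alpha]\pi_n(i)\pi_n(U,x_0) \in \Sigma_{[\alpha]}$; since $\{V_k\}$ is a local basis at $x_0$ in $X$, there is some $V_k \subseteq U$.

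The key point is then the monotonicity of the subgroups $\pi_n(i)\pi_n(U,x_0)$ under inclusion of the neighbourhoods, which follows from the functoriality of $\pi_n$. Writing $j : V_k \hookrightarrow U$ for the inclusion of one open neighbourhood in the other, the inclusion $i_{V_k} : V_k \hookrightarrow X$ factors as $i_U \circ j$, so $\pi_n(i_{V_k}) = \pi_n(i_U) \circ \pi_n(j)$, whence
\[
\pi_n(i_{V_k})\big(\pi_n(V_k,x_0)\big) = \pi_n(i_U)\Big(\pi_n(j)\big(\pi_n(V_k,x_0)\big)\Big) \subseteq \pi_n(i_U)\big(\pi_n(U,x_0)\big).
\]
Left-translating by $[\alpha]$ gives $[\alpha]\pi_n(i)\pi_n(V_k,x_0) \subseteq [\alpha]\pi_n(i)\pi_n(U,x_0)$, so the chosen member of the countable family sits inside the given basic neighbourhood. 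Hence $\{[\alpha]\pi_n(i)\pi_n(V_k,x_0)\}_{k}$ is a countable local basis at $[\alpha]$, and since $[\alpha]$ was arbitrary, $\pi_n^{wh}(X,x_0)$ is first countable.

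I do not expect a serious obstacle here: the only substantive ingredient is the functorial factorization $\pi_n(i_{V_k}) = \pi_n(i_U)\circ\pi_n(j)$ that makes the subgroups decrease as the neighbourhoods shrink, and the elementary reduction to a nested countable basis. The mild care needed is simply to argue at a general point $[\alpha]$ rather than only at the identity, which the left-translation above handles.
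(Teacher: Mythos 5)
Your proof is correct and follows exactly the route the paper intends: the paper states the lemma without proof, immediately after noting that $\Sigma_{[\alpha]}$ is a local basis at $[\alpha]$, and the intended argument is precisely your reduction of that local basis to a countable cofinal subfamily via the monotonicity $\pi_n(i_{V})\pi_n(V,x_0)\subseteq \pi_n(i_{U})\pi_n(U,x_0)$ for $V\subseteq U$. Nothing is missing; the nesting step is not even strictly necessary, since choosing any $U_k\subseteq U$ from the given countable basis already suffices.
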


Let $n \ge 1$. Recall that an $n$-loop $\alpha: (\mathbb{S}^n, 1) \to (X, x_0)$ is said to be small if it has a homotopic equivalent in every open neighbourhood of $x_0$ \cite{PasGha}, and  $\pi_n^s(X, x_0)$ denotes the collection of all classes of small $n$-loops at $ x_0$. Let $[\alpha] \in \bigcap \{\pi_n(i) \pi_n(U, x_0)\ | \ U$ is an open neighbourhood of $x_0 \}$, then $\alpha$ has a homotopic representative in any open neighbourhood of $x_0$, that is, $\alpha $ is a small $n$-loop at $ x_0$. Thus, the infinitesimal subgroup of $\pi_n^{wh}(X, x_0)$ is equal to $\pi_n^s(X, x_0)$.
 It is easy to see that $\pi_n^{wh}(X, x_0)$ is indiscrete if and only if $\pi_n^s(X, x_0) = \pi_n(X, x_0)$. As an example, if $\mathbb{HA}$ denotes the harmonic archipelago space, and $\theta$ denotes the origin, then $\pi_1^{wh}(\mathbb{HA}, \theta)$ is indiscrete. Moreover, if $\pi^{wh}_n(X,x_0)$ is discrete, then $\pi^{s}_n(X,x_0)$ is the trivial subgroup. The converse does not hold, in general. As a counterexample, for the $n$-dimensional Hawaiian earring, $\mathbb{HE}^n$  at the origin $\theta$, $\pi^{s}_n(\mathbb{HE}^n,\theta)$ is trivial, but $\pi^{wh}_n(\mathbb{HE}^n, \theta)$ is not discrete (see Example \ref{ex2.4}).

\begin{remark}[\cite{BogSie}]\label{re2.4}
With the previous assumption and notation, for $g \in G$ and $S \in \Sigma$, a basic set $gS$  is both open and closed in the subgroup topology, since the cosets of a given subgroup form a partition of $G$. The subgroup topology is a homogeneous space, since left translations by elements of $G$ determine self-homeomorphisms on $G$. However, the group $G$ is not necessarily a topological group, since the right translation by a fixed element of $G$ is not continuous, in general.
The infinitesimal subgroup is a closed subgroup in the subgroup topology on $G$ induced by $\Sigma$. Indeed, $S_{\Sigma}$ is the closure of the identity $e \in G$, and its coset $gS_{\Sigma}$ is the closure of the element $g \in G$.
\end{remark}
Note that $\pi^{s}_n(X,x_0)$ is a closed subgroup of $\pi^{wh}_n(X,x_0)$, but it may not be open, in general. However, some nice properties occur if it is open. The following proposition generalizes Proposition 2.4 in \cite{paper4}, by a similar argument, for the whisker topology on the $n$th homotopy group ($n \ge 1$).

\begin{proposition}
Let $(X,x_0) $ be a pointed topological space, then the following statements are equivalent.

\begin{enumerate}
\item
$\pi^{s}_n(X,x_0)$ is an open subgroup of $\pi^{wh}_n(X,x_0)$.
\item
Every closed subgroup of $\pi^{wh}_n(X,x_0)$ is an open subgroup.
\item
A subgroup $H$ of $\pi^{wh}_n(X,x_0)$ is open if and only if it is closed.
\item
A subgroup $H$ of $\pi^{wh}_n(X,x_0) $ is open if and only if $\pi^{s}_n(X,x_0) \leq H$.
\end{enumerate}
\end{proposition}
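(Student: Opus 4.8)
The plan is to reduce the whole statement to two hypothesis-free features of the subgroup topology and then run a single cyclic chain of implications. Writing $G = \pi_n^{wh}(X,x_0)$ and $S_\Sigma = \pi_n^s(X,x_0)$ for brevity, I would first isolate the following two facts, valid with no extra assumption. \emph{Fact A:} a subgroup $H \leqslant G$ is open if and only if it contains some member $S \in \Sigma$; indeed, if $H$ is open then, being a neighbourhood of $e$, it must contain a basic neighbourhood $eS = S$, while conversely if $S \subseteq H$ then $H = \bigcup_{h\in H} hS$ is a union of basic open cosets. \emph{Fact B:} every closed subgroup $H$ satisfies $S_\Sigma \leqslant H$; since $e \in H$ and $S_\Sigma = \overline{\{e\}}$ by Remark \ref{re2.4}, we get $S_\Sigma = \overline{\{e\}} \subseteq \overline{H} = H$. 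I would also record the two ``forward'' ingredients that come straight from Remark \ref{re2.4}: an open subgroup is closed (its complement is the union $\bigcup_{gH \neq H} gH$ of open cosets, as left translations are homeomorphisms), and the infinitesimal subgroup $S_\Sigma$ is itself closed.

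With these in hand, the equivalences follow from the chain $(1) \Rightarrow (4) \Rightarrow (3) \Rightarrow (2) \Rightarrow (1)$. For $(1) \Rightarrow (4)$: the implication ``$H$ open $\Rightarrow S_\Sigma \leqslant H$'' holds unconditionally, since an open $H$ contains some $S \in \Sigma$ and $S_\Sigma = \bigcap_{S'\in\Sigma} S' \subseteq S$; conversely, if $S_\Sigma$ is open and $S_\Sigma \leqslant H$, then $H = \bigcup_{h\in H} hS_\Sigma$ is a union of translates of the open set $S_\Sigma$, hence open. For $(4) \Rightarrow (3)$: openness gives closedness by the forward ingredient, and if $H$ is closed then $S_\Sigma \leqslant H$ by Fact B, whence $H$ is open by $(4)$. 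The implication $(3) \Rightarrow (2)$ is immediate, as a closed subgroup is then open. Finally $(2) \Rightarrow (1)$ holds because $S_\Sigma$ is closed, hence open under $(2)$.

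Since the content is almost entirely formal once this dictionary is fixed, I do not anticipate a genuine obstacle. The only place demanding care is the verification of Facts A and B, and in particular the identifications $\overline{\{e\}} = S_\Sigma$ and $\overline{H} = H$ for closed $H$; that is, making sure the closure of a point is exactly its infinitesimal coset, which is precisely what Remark \ref{re2.4} supplies. Everything beyond that is bookkeeping with cosets and left translations, paralleling Proposition~2.4 of \cite{paper4}.
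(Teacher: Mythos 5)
Your argument is correct: Facts A and B are exactly the right reductions, and the cycle $(1)\Rightarrow(4)\Rightarrow(3)\Rightarrow(2)\Rightarrow(1)$ closes without gaps. The paper itself omits the proof, deferring to the "similar argument" of Proposition 2.4 in \cite{paper4}, and your write-up is precisely that standard argument (open subgroups contain a basic $S\in\Sigma$ and are unions of cosets; closed subgroups contain $\overline{\{e\}}=\pi_n^s(X,x_0)$), so it matches the intended approach.
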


By Remark \ref{re2.4}, every subgroup topology on a given group makes it a homogeneous space, and hence, it is a left topological group. It was shown in \cite[Proposition 2.1] {paper4} that if a subgroup topology on a group makes it a right topological group, then it is a topological group. Since $ \pi_n(X,x_0) $ is abelian, for $ n \geq 2 $, the right translation map $r_{\alpha} : \pi_n(X,x_0) \rightarrow \pi_n(X,x_0) $ by the rule $ r_{\alpha}([\beta])= [\alpha*\beta] $ is equal to the left translation map $ l_{\alpha} : \pi_n(X,x_0) \rightarrow \pi_n(X,x_0) $ by the rule $ l_{\alpha}([\beta]) = [\beta*\alpha] $, for any $ [\alpha] \in \pi_n(X,x_0) $. Hence, $r_{\alpha}$ is continuous for all $[\alpha] \in \pi_n(X, x_0)$. Therefore, $\pi_n^{wh}(X, x_0)$ is a right topological group, too, for $n \ge 2$. As a consequence we have the following result.

\begin{proposition}\label{pr2.4}
Let $ (X,x_0)$ be a pointed space. If $n \ge 2$, then  $ \pi^{wh}_n(X,x_0) $ is a topological group.
\end{proposition}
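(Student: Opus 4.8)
The plan is to deduce the result directly from the criterion recorded just above the statement, namely that a subgroup topology making a group into a \emph{right} topological group is automatically a full topological group \cite[Proposition 2.1]{paper4}. Since $\pi_n^{wh}(X,x_0)$ is by construction the subgroup topology attached to the neighbourhood family $\Sigma$, and since Remark \ref{re2.4} already guarantees that every subgroup topology yields a left topological group, the whole task reduces to verifying that each right translation is continuous.

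To this end I would first invoke the classical fact that for $n \ge 2$ the group $\pi_n(X,x_0)$ is abelian; this is the Eckmann--Hilton argument applied to the two compatible concatenation operations available on $\mathbb{S}^n$ when $n \ge 2$. Granting commutativity, for a fixed class $[\alpha]$ the right translation $r_{[\alpha]}([\beta]) = [\alpha * \beta]$ and the left translation $l_{[\alpha]}([\beta]) = [\beta * \alpha]$ send each $[\beta]$ to the same element, so $r_{[\alpha]} = l_{[\alpha]}$ as self-maps of $\pi_n(X,x_0)$.

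By the homogeneity described in Remark \ref{re2.4}, each left translation $l_{[\alpha]}$ is a self-homeomorphism of $\pi_n^{wh}(X,x_0)$; hence so is each $r_{[\alpha]}$, and in particular every right translation is continuous. Thus $\pi_n^{wh}(X,x_0)$ is a right topological group, and applying \cite[Proposition 2.1]{paper4} upgrades this to a genuine topological group, which is precisely the assertion.

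The only genuinely delicate input I expect is the cited Proposition 2.1 of \cite{paper4}: the passage from \emph{separate} continuity of the two families of translations to \emph{joint} continuity of multiplication (together with continuity of inversion) is exactly what must be supplied there, and it is special to subgroup topologies rather than a general topological phenomenon. Everything else is formal once abelianness is in hand, and the hypothesis $n \ge 2$ enters only through it; indeed the $n=1$ case breaks down precisely because $\pi_1$ need not be abelian, as the Hawaiian-earring example $\pi_1^{wh}(\mathbb{HE}^1,\theta)$ recalled in the introduction demonstrates.
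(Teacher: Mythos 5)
Your argument is correct and coincides with the paper's own proof: the paper likewise observes that for $n\ge 2$ abelianness of $\pi_n(X,x_0)$ forces each right translation to equal the corresponding left translation, which is a homeomorphism by Remark \ref{re2.4}, so that $\pi_n^{wh}(X,x_0)$ is a right topological group, and then cites \cite[Proposition 2.1]{paper4} to upgrade this to a topological group. Your closing remark about where the real work lies (in the cited proposition) and why $n=1$ fails is also consistent with the paper's discussion.
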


Since $\pi_1(X, x_0)$ is not necessarily an abelian group, Proposition \ref{pr2.4} does not hold in the case of $ n = 1 $. As an example $ \pi^{wh}_1(\mathbb{HE}^1, \theta) $ is not a topological group \cite{BroDyd2}. For $n =1$, there exists a necessary and sufficient condition called SLTL, established in \cite[Proposition 2]{paper3} for $\pi_1^{wh}(X, x_0)$ to be a topological group.

Fisher et al. \cite[Theorem 4.10 (d)]{FisZas} proved that if $X$ is metric, then so is the path space $\widetilde{X}$, whenever $X$ is shape injective. Also, by Lemma \ref{le2.3}, if $X$ has a countable local basis at $x_0$, then $\pi_n^{wh}(X, x_0)$ is first countable. In the following, we see that for $n \ge 2$, there is sufficient conditions for $\pi_n^{wh}(X, x_0)$ to be metric.

G.R. Conner et al. \cite{ConLam} defined the homotopically Hausdorff property. This property has been extended to $n$-homotopically Hausdorff property by H. Passandideh et al. \cite[Definition 3.3]{PasGha} for $n \ge 1$. A space $X$ is called $n$-homotopically Hausdorff at $x_0$ whenever for each essential $n$-loop $\alpha$ in $X$ at $x_0$, there exists an open neighbourhood $U$ of $x_0$, containing no $n$-loop at $x_0$ homotopic to $\alpha$, that is $\pi_n^s (X, x_0) = \langle e \rangle$.

\begin{corollary}\label{co2.11}
Let $X$ be a space having a countable local basis at $x_0$, and let $n \ge 2$. If $X$ is $n$-homotopically Hausdorff at $x_0$, then $\pi_n^{wh} (X, x_0)$ is a metric topological group.
\end{corollary}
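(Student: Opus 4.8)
\emph{Proof proposal.} The plan is to assemble three structural properties of $\pi_n^{wh}(X,x_0)$ and then appeal to a classical metrization criterion for topological groups. First I would record the two ingredients already in hand: since $n \ge 2$, Proposition \ref{pr2.4} guarantees that $\pi_n^{wh}(X,x_0)$ is a genuine topological group (not merely a left topological group), and since $X$ has a countable local basis at $x_0$, Lemma \ref{le2.3} shows that $\pi_n^{wh}(X,x_0)$ is first countable, i.e. it admits a countable local basis at the identity.

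The remaining and essential step is to verify that the whisker topology is Hausdorff under the hypothesis. Here I would use the identification, established in Section \ref{sec2}, of the infinitesimal subgroup of $\pi_n^{wh}(X,x_0)$ with $\pi_n^s(X,x_0)$, together with Remark \ref{re2.4}, which states that the infinitesimal subgroup is precisely the closure $\overline{\{e\}}$ of the identity. By definition, $X$ being $n$-homotopically Hausdorff at $x_0$ means $\pi_n^s(X,x_0)=\langle e\rangle$; hence $\overline{\{e\}}=\langle e\rangle$ and the space is $T_1$. Since a $T_1$ topological group is automatically Hausdorff, it follows that $\pi_n^{wh}(X,x_0)$ is a Hausdorff topological group.

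Finally, I would invoke the Birkhoff--Kakutani theorem: a topological group is metrizable if and only if it is Hausdorff and first countable, in which case it even carries a left-invariant metric inducing its topology. Applying this to $\pi_n^{wh}(X,x_0)$, which has just been shown to satisfy both hypotheses, yields that it is a metric topological group, completing the argument. I expect the delicate point to be the passage from $T_1$ to Hausdorff and the correct invocation of the metrization theorem for the group---rather than the homotopy-theoretic input---since the hypothesis $n \ge 2$ is exactly what upgrades the left topological group structure of the subgroup topology to a topological group, without which the Birkhoff--Kakutani criterion would not apply.
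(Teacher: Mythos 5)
Your proposal is correct and follows essentially the same route as the paper: topological group by Proposition \ref{pr2.4}, first countability by Lemma \ref{le2.3}, Hausdorffness from the $n$-homotopically Hausdorff hypothesis, and then the Birkhoff--Kakutani metrization criterion (for which the paper cites \cite[Theorem 3.3.12]{ArhTka}). The only difference is that you justify the Hausdorff step internally --- identifying $\overline{\{e\}}$ with the infinitesimal subgroup $\pi_n^s(X,x_0)$ via Remark \ref{re2.4}, concluding $T_1$ and hence Hausdorff for a topological group --- whereas the paper cites \cite[Theorem 2.9 (c)]{BogSie} directly; both are valid.
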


\begin{proof}
By Proposition \ref{pr2.4}, $\pi_n^{wh} (X, x_0)$ is a topological group.  If $X$ is $n$-homotopically Hausdorff at $x_0$, then by \cite[Theorem 2.9 (c)]{BogSie}, $\pi_n^{wh}(X, x_0)$ is Hausdorff and thus, satisfies $T_1$-separation axiom.
Hence, by \cite[Theorem 3.3.12, p. 155]{ArhTka}, $\pi_n^{wh}(X, x_0)$ is metric if and only if it is first countable. Since $X$ has a countable local basis at $x_0$, by Lemma \ref{le2.3},  $\pi_n^{wh}(X, x_0)$ is first countable. Therefore, $\pi_n^{wh}(X, x_0)$ is a metric topological group.
\end{proof}

Note that Ghane et al. in \cite[Page 264]{Ghane} by a filter base which forms a fundamental system of neighborhoods of the identity element gave a topology to the homotopy group $\pi_n(X, x_*)$ denoted by $\pi_n^{lim}(X, x_*)$. It should be mentioned that one can prove the topology of $\pi_n^{lim}(X, x_*)$ coincides with the whisker topology $\pi_n^{wh}(X, x_*)$.

\section{Whisker Topology and Local Properties}\label{sec3}

In this section, we are going to find some relationships between topological properties of $\pi_n^{wh}(X, x_0)$ and local properties of the space $X$ at the base point $x_0$.
Moreover, we discuss  conditions for $\pi_n^{wh}(X, x_0)$ to be invariant with respect to the base point $x_0$.

The following proposition states the equivalence condition for $\pi^{wh}_n(X,x_0)$ to be discrete. Recall from \cite[Definition 3.1]{Ghane} that a pointed topological space $(X,x_0)$ is called  semilocally $n$-simply connected at $x_0$ if there exists an open neighbourhood $U$ at $x_0$ for which any $n$-loop in $U$ based at $x_0$ is nulhomotopic in $X$.
\begin{proposition}\label{pro2.3}
Let $(X,x_0)$ be a pointed space, and let $n \ge 1$. Then $\pi^{wh}_n(X,x_0)$ is discrete if and only if $ X $ is semilocally $n$-simply connected at $x_0$.
\end{proposition}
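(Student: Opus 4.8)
The plan is to exploit the homogeneity of the subgroup topology (Remark \ref{re2.4}) to reduce discreteness to a single condition at the identity element $e = [c_{x_0}]$, the class of the constant $n$-loop. Since left translations are self-homeomorphisms, $\pi_n^{wh}(X, x_0)$ is discrete if and only if the singleton $\{e\}$ is open. Thus the whole argument hinges on identifying exactly when $\{e\}$ is open, and I expect the bulk of the work to be a careful unwinding of the two definitions rather than any deep topological input.

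First I would recall that $\Sigma_{[e]} = \{\pi_n(i)\pi_n(U,x_0) \mid U \text{ an open neighbourhood of } x_0\}$ is a local basis at $e$. The key observation, which I would isolate as the central computation, is that for a fixed open neighbourhood $U$ the basic open set $\pi_n(i)\pi_n(U,x_0)$ equals $\{e\}$ precisely when every $n$-loop in $U$ based at $x_0$ is nulhomotopic in $X$. This is because $\pi_n(i)\pi_n(U,x_0)$ is, by definition, the image under the inclusion-induced homomorphism of all homotopy classes represented inside $U$, and this image collapses to the trivial class exactly under that nulhomotopy condition. That condition is verbatim the definition of $X$ being semilocally $n$-simply connected at $x_0$.

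For the forward direction I would assume discreteness, so that $\{e\}$ is open; since $\Sigma_{[e]}$ is a local basis, some basic set $\pi_n(i)\pi_n(U,x_0)$ is contained in $\{e\}$, and because $e$ always lies in this set (the constant loop is a loop in $U$), the containment forces equality. The computation above then yields semilocal $n$-simple connectivity at $x_0$. For the converse I would take the witnessing neighbourhood $U$ from the definition, note that it gives $\pi_n(i)\pi_n(U,x_0)=\{e\}$, hence $\{e\}$ is open, and then invoke homogeneity once more to conclude that every singleton is open and the topology is discrete.

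The only point demanding slight care, and the closest thing to an obstacle, is verifying that $\pi_n(i)\pi_n(U,x_0)$ really collapses to $\{e\}$ if and only if loops in $U$ die in $X$; this is immediate once one remembers that the relevant subgroup is an image under $\pi_n(i)$ rather than a preimage, but it is the single place where a misreading could creep in. Everything else is formal manipulation with the local basis $\Sigma_{[e]}$ together with the homogeneity guaranteed by Remark \ref{re2.4}.
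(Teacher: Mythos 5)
Your proposal is correct and follows essentially the same route as the paper's proof: both directions reduce to the observation that a basic open set $\pi_n(i)\pi_n(U,x_0)$ from the local basis at the identity is trivial exactly when $X$ is semilocally $n$-simply connected at $x_0$, with homogeneity of the subgroup topology (used implicitly in the paper) upgrading openness of $\{e\}$ to discreteness. Your explicit identification of the equivalence $\pi_n(i)\pi_n(U,x_0)=\{e\}$ with the nulhomotopy condition is just a more careful spelling-out of the same argument.
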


\begin{proof}
If $ X $ is  semilocally $n$-simply connected at $ x_0 $, then there is an open neighbourhood $ U $ of $ x_0 $ such that $\pi_n(i)\pi_n(U,x_0) $ is trivial. Since $\pi_n(i)\pi_n(U,x_0) \in \Sigma $, then $ \pi^{wh}_n(X,x_0) $ is discrete. Conversely, if $ \pi^{wh}_n(X,x_0) $ is discrete, then the trivial subgroup is open in $ \pi^{wh}_n(X,x_0) $. Since $\Sigma$ is a local basis, there is an open neighbourhood $ U $ of $ x_0 $, such that $\pi_n(i)\pi_n(U,x_0) \subseteq \{e\}$, that is $\pi_n(i)\pi_n(U,x_0) = \{e\}$. Hence $X$ is semi-locally $n$-simply connected at $x_0$.
\end{proof}

H. Fischer et al. \cite[]{FisZas} defined homotopically Hausdorff property relative to $H$, where $H$ is a subgroup of $\pi_1(X , x_0)$. Brodskiy et al. \cite[Definition 4.11]{BroDyd2}  generalized this concept to $(G,H)$-homotopically Hausdorff property, where  $H \leqslant G \leqslant \pi_1(X , x_0)$. A space $X$ is called $(G, H)$-homotopically Hausdorff, if for any $g \in G - H$ and any path $\alpha$ originating at $x_0$, there is an open neighbourhood $U$ of $\alpha(1)$ in $X$ such that none of the elements of $Hg$ can be expressed as $[\alpha * \gamma * \alpha^{-1}]$ for any loop $\gamma$ in $(U, \alpha(1))$.
In the following, we define $n$-homotopically Hausdorff property relative to a pair of subgroups $(G, H)$ at the base point $x_0$, where $H \leqslant G \leqslant \pi_n(X, x_0)$ ($n \ge 1$).

\begin{definition}\label{de2.7}
Let $H \leqslant G \leqslant \pi_n(X , x_0)$, and let $n \ge 1$. We say that $X$ is $n$-homotopically Hausdorff relative to $(G,H)$ at $x_0$, if for each $g \in G - H$, there exists an open neighbourhood $U$ of $x_0$, such that no element of $Hg$ can be expressed as $[\gamma]$, for any $n$-loop $\gamma$ in $(U, x_0)$.
\end{definition}
Note that $X$ is $n$-homotopically Hausdorff relative to $G$, if $X$ is $n$-homotopically Hausdorff relative to $(G, \{e\})$ at $x_0$.
Although, $n$-homotopically Hausdorff property relative to $(G, H)$ at $x_0$ is defined closely to $(G, H)$- homotopically Hausdorff property \cite[Definition 4.11]{BroDyd2}, if $X$ is $1$-homotopically Hausdorff relative to $(G, H)$, $H \leqslant G \leqslant \pi_1(X, x_0)$ at any point in the sense of Definition \ref{de2.7}, it does not need to be $(G, H)$-homotopically Hausdorff in the sense of \cite{BroDyd2}.

It is proved that $X$ is homotopically Hausdorff relative to $(G, H)$, if $H$ is closed in $G$ endowed with a new topology \cite[Lemma 4.14]{BroDyd2} called lasso topology in \cite{BroDyd}. Also, Fisher et al.  \cite[]{FisZas} proved that homotopically Hausdorff relative to a subgroup $H$ is equivalent to Hausdorffness of a path space equipped with a suitable topology. The following theorem presents a similar explanation of \cite[Proposition 4.12 and Lemma 4.16]{BroDyd2}, \cite[Lemma 2.10 and Proposition 6.3]{FisZas}.

\begin{theorem}\label{th2.8}
Let $(X, x_0)$ be a pointed space, $H \leqslant G \leqslant \pi_n^{wh}(X, x_0)$, and $n \ge 1$. Then the following statements are equivalent.\\
$(i)$ $X$ is $n$-homotopically Hausdorff relative to $(G, H)$ at $x_0$.\\
$(ii)$ $H$ is a closed subgroup of $G$.\\
$(iii)$ The coset space $\frac{G}{H}$, with the quotient topology, is a homogenous Hausdorff space.
\end{theorem}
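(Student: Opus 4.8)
The plan is to prove the statement through the two equivalences $(i)\Leftrightarrow(ii)$ and $(ii)\Leftrightarrow(iii)$. Throughout I abbreviate $S_U:=\pi_n(i)\pi_n(U,x_0)\in\Sigma$ for an open neighbourhood $U$ of $x_0$, and recall that $S_U$ is exactly the set of classes $[\gamma]$ represented by an $n$-loop $\gamma$ lying in $(U,x_0)$. I regard $G$ as a subspace of $\pi_n^{wh}(X,x_0)$; since the basic cosets partition, the basic neighbourhoods of a point $g\in G$ are the clopen sets $g(S_U\cap G)=gS_U\cap G$, so $G$ itself carries the subgroup topology determined by $\{S_U\cap G\}$.

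For $(i)\Leftrightarrow(ii)$ I would translate both conditions into the single statement ``for each $g\in G-H$ there is $U$ with $g\notin HS_U$''. On one side, Definition~\ref{de2.7} says that $X$ is $n$-homotopically Hausdorff relative to $(G,H)$ precisely when, for each $g\in G-H$, some $U$ satisfies $Hg\cap S_U=\emptyset$; since $S_U$ is a subgroup, the computation $hg=s\Leftrightarrow g=h^{-1}s$ gives $Hg\cap S_U=\emptyset\Leftrightarrow g\notin HS_U$. On the other side, $H$ is closed in $G$ iff $G-H$ is open, that is, iff every $g\in G-H$ has a basic neighbourhood $gS_U\cap G$ missing $H$; and $gS_U\cap H=\emptyset\Leftrightarrow g\notin HS_U$ by the same computation (using $S_U^{-1}=S_U$). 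Hence $(i)$ and $(ii)$ reduce to the identical condition, and this step is the routine heart of the argument.

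For $(ii)\Leftrightarrow(iii)$, let $q\colon G\to G/H$ be the quotient map onto the left coset space. The implication $(iii)\Rightarrow(ii)$ is immediate: a Hausdorff space is $T_1$, so $\{eH\}$ is closed in $G/H$, and since $q$ is continuous $H=q^{-1}(\{eH\})$ is closed in $G$. For $(ii)\Rightarrow(iii)$ I would first obtain homogeneity: by Remark~\ref{re2.4} every left translation $L_c\colon G\to G$ ($c\in G$) is a self-homeomorphism, and it descends through $q$ to a map $\bar L_c\colon gH\mapsto cgH$ of $G/H$ which is a homeomorphism by the universal property of the quotient; these maps act transitively, so $G/H$ is homogeneous. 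For the Hausdorff property, homogeneity reduces the task to separating the identity coset $eH$ from an arbitrary coset $cH$ with $c\in G-H$; here I would invoke $(i)$ to choose $U$ with $c\notin HS_U$ and exploit that $HS_U=\bigcup_{h\in H}hS_U$ is a union of clopen $S_U$-cosets, hence itself clopen, in order to build disjoint saturated neighbourhoods of $eH$ and $cH$.

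The step I expect to be the main obstacle is precisely $(ii)\Rightarrow(iii)$ when $n=1$. For $n\ge2$, Proposition~\ref{pr2.4} makes $\pi_n^{wh}(X,x_0)$ a genuine topological group, so $G/H$ is Hausdorff exactly when $H$ is closed by the standard theory of coset spaces (cf.\ \cite{ArhTka}), and the conclusion is automatic. For $n=1$, however, $\pi_1^{wh}(X,x_0)$ is only a left topological group: inversion and right translations need not be continuous, the quotient onto left cosets need not be open (as $VH$ need not be open), and the usual ``topological group modulo closed subgroup'' theorem is unavailable. The delicate point is to argue one-sidedly throughout, separating cosets only by means of the clopen sets $HS_U$ arising as unions of left $S_U$-cosets, and to verify that these separating sets can be made genuinely $H$-saturated for the coset relation while remaining open; this is where I would concentrate the effort.
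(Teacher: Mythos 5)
Your equivalence $(i)\Leftrightarrow(ii)$ is correct, and is in fact tidier than the paper's treatment: the paper proves only $(i)\Rightarrow(ii)$ directly (by contradiction) and recovers the converse by going around the cycle through $(iii)$, whereas your translation of both statements into the single condition ``for each $g\in G-H$ there is $U$ with $g\notin H\,\pi_n(i)\pi_n(U,x_0)$'' yields both directions at once. Likewise your $(iii)\Rightarrow(ii)$ (Hausdorff $\Rightarrow T_1$, so $H=q^{-1}(\{eH\})$ is closed by continuity of $q$) is correct and simpler than the paper's $(iii)\Rightarrow(i)$. The genuine problem is $(ii)\Rightarrow(iii)$: there your text is not a proof but a plan, and the step you explicitly defer (``this is where I would concentrate the effort'') is exactly the hard content of the theorem when $n=1$. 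Moreover, the plan as described runs into the obstruction you only half-identify. Writing $S_U=\pi_n(i)\pi_n(U,x_0)$, the set $HS_U=\bigcup_{h\in H}hS_U$ is indeed clopen, but it is saturated for the \emph{right}-coset partition ($H\cdot HS_U=HS_U$), not for the \emph{left}-coset relation ($HS_U\,H\neq HS_U$ in general); yet it is the left-coset space $G/H$ on which your homogeneity argument (descended left translations) lives, so $HS_U$ does not descend to an open subset of that quotient. If instead you work with the right-coset space, where $HS_U$ does give a clopen set separating the identity coset from $Hc$, then homogeneity fails (left translations do not descend to right cosets unless $H$ is normal), and separating two arbitrary cosets $Hc_1\neq Hc_2$ there amounts to finding $S_U$ with $c_2^{-1}Hc_1\cap S_U=\emptyset$, i.e.\ to closedness of the conjugates $c^{-1}Hc$ --- which does not follow from $(ii)$, because conjugation is not continuous in a left topological group. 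So for $n=1$ the sketch proves neither version, and it is not a routine matter of ``concentrating effort'': some additional input is required.

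For comparison, the paper does not fill this step by hand either; it discharges it by citation. From $(ii)$ it invokes \cite{ArhTka} (Theorem 1.5.1) to conclude that $G/H$ is a homogeneous $T_1$-space --- the part you re-proved --- and then \cite{BogSie} (Theorem 3.4, the implication from $T_0$ to Hausdorff for coset spaces in the subgroup topology) to obtain Hausdorffness; that second reference supplies precisely the ingredient missing from your proposal. Your remark about $n\ge2$ is correct: there $\pi_n(X,x_0)$ is abelian, Proposition \ref{pr2.4} makes $\pi_n^{wh}(X,x_0)$ a topological group, left and right cosets coincide, $HS_U$ is saturated on both sides, and your separation argument closes. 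But the theorem is stated for all $n\ge1$, so as written the proposal is incomplete: you must either give a genuine one-sided proof that closedness of $H$ forces the left-coset quotient to be Hausdorff (using no continuity of inversion, right translations, or conjugation), or quote the Bogley--Sieradski result as the paper does.
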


\begin{proof}
\begin{enumerate}
\item
($(i) \Rightarrow (ii)$) Let $X$ be $n$-homotopically Hausdorff relative to $(G, H)$ at $x_0$. Then, for every $g \in G - H$, there exists an open neighbourhood $U_g$ of $x_0$, such that $\pi_n(i) \pi_n(U_g, x_0) \cap Hg = \emptyset$, where $i:U \hookrightarrow X$ is the inclusion map. Assume that $g \in G - H$ and $g \in \overline{H}$. Thus, for each open neighbourhood $V$ of $g$ in $G$, $V \cap H \neq \emptyset$. Put $V = g \pi_n(i) \pi_n(U_g, x_0) \cap G$. Then $(g \pi_n(i) \pi_n(U_g, x_0) \cap G) \cap H  \neq \emptyset$. Since $H \leqslant G$, $g \pi_n(i) \pi_n(U_g, x_0) \cap G \cap H = g \pi_n(i) \pi_n(U_g, x_0) \cap H$. Let $h \in g \pi_n(i) \pi_n(U_g, x_0) \cap H$. Then $g^{-1} h \in \pi_n(i) \pi_n(U_g, x_0)$. Since $\pi_n(i) \pi_n(U_g, x_0)$ is a subgroup of $\pi_n(X, x_0)$, $h^{-1} g \in \pi_n(i) \pi_n(U_g, x_0)$. Since $H$ is a subgroup of $\pi_n(X, x_0)$, $h^{-1} \in H$, and so $h^{-1} g \in Hg$. But we showed that $h^{-1} g$ is an element of $\pi_n(i) \pi_n(U_g, x_0) \cap Hg$ which is a contradiction to $\pi_n(i) \pi_n(U_g, x_0) \cap Hg = \emptyset$. Therefore, if $g \in \overline{H}$, then $g \not \in G -H$, that is $H$ is closed in $G$.

\item
($(ii) \Rightarrow (iii)$) Let $H$ be closed in $G$. Since $\pi_n^{wh}(X, x_0)$ is a left topological group, its subgroup $G$ is also a left topological group. Thus, by \cite[Theorem 1.5.1, p. 37]{ArhTka}, the coset space $\frac{G}{H}$ endowed with the quotient topology is a homogeneous $T_1$-space. Since each $T_1$-space is a $T_0$-space, the coset space $\frac{G}{H}$ is a $T_0$-space. By \cite[Theorem 3.4, p. 19]{BogSie}, part $((iii) \Rightarrow (i))$, the coset space $\frac{G}{H}$ is Hausdorff.

\item
$(iii) \Rightarrow (i))$ Let the coset space $\frac{G}{H}$ be Hausdorff. Then, for each $g \in G - H$, there exist open neighbourhoods $V$ and $W$ of $H$ and $Hg$, respectively, in $\frac{G}{H}$, such that $H \in V$  and $Hg \in W$, and $V \cap W = \emptyset$. Thus, $Hg \not \in V$, or equivalently, there is no $h \in H$ such that $hg \in q^{-1} (V)$, that is $Hg \cap q^{-1} (V) = \emptyset$, where $q :G \to \frac{G}{H}$ is the quotient map. Since $V$ is an open neighbourhood of $H$ in $\frac{G}{H}$, and $q$ is continuous, $q^{-1} (V)$ is an open neighbourhood of the identity in $G$. Hence, there exists an open neighbourhood $U$ of $x_0$ such that $\pi_n(i) \pi_n(U, x_0) \subseteq q^{-1} (V)$, where $i:U \to X$ is the inclusion map. Since $Hg \cap q^{-1} (V) = \emptyset$, and  $\pi_n(i) \pi_n(U, x_0) \subseteq q^{-1} (V)$, we can conclude that $Hg \cap \pi_n(i) \pi_n(U, x_0) = \emptyset$. Accordingly, for each $g \in G - H$, we can find an open neighbourhood $U$ of $x_0$ such that $\pi_n(i) \pi_n(U, x_0) \cap Hg = \emptyset$. Therefore, $X$ is $n$-homotopically Hausdorff relative to $(G, H)$ at $x_0$.
\end{enumerate}\end{proof}
Fisher et al. \cite[Lemma 2.10]{FisZas} proved that $X$ is homotopically Hausdorff at any point if and only if path space $\widetilde{X}$, containing $\pi_1^{wh}(X, x_0)$ as a subspace, is Hausdorff. Therefore, if $X$ is homotopically Hausdorff at any point, then $\pi_1^{wh}(X, x_0)$ is Hausdorff. The following corollary shows that the necessary and sufficient condition for $\pi_n^{wh}(X, x_0)$ to be Hausdorff is $n$-homotopically Hausdorffness of $X$ at $x_0$ for $n \ge 1$. Here, we give a special consequnce of Theorem \ref{th2.8}, when $G = \pi_n(X, x_0)$ and $H = \{e\}$.

\begin{corollary}\label{co2.9}
Let $(X, x_0)$ be a pointed space, and $n$ be a natural number. Then $X$ is $n$-homotopically Hausdorff at $x_0$ if and only if $\pi_n^{wh}(X, x_0)$ is Hausdorff.
\end{corollary}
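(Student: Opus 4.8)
The plan is to invoke Theorem~\ref{th2.8} directly, with the specialization $G = \pi_n(X, x_0)$ (equipped with the whisker topology, i.e.\ $\pi_n^{wh}(X, x_0)$) and $H = \{e\}$, and to observe that under this choice the equivalence $(i) \Leftrightarrow (iii)$ collapses to precisely the statement of the corollary. The only work is to unwind the two relevant conditions of the theorem for this particular pair $(G, H)$; there is no new topological content to establish, which is exactly why this is recorded as a corollary.

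First I would check that condition $(i)$ of Theorem~\ref{th2.8} for $(G, H) = (\pi_n(X, x_0), \{e\})$ is literally the definition of $X$ being $n$-homotopically Hausdorff at $x_0$. Indeed, with $H = \{e\}$ we have $Hg = \{g\}$ for every $g$, and $G - H$ is precisely the set of classes of essential $n$-loops at $x_0$. Thus the condition ``for each $g \in G - H$ there is an open neighbourhood $U$ of $x_0$ such that no element of $Hg$ is represented by an $n$-loop $\gamma$ in $(U, x_0)$'' reads: for each essential class $g = [\alpha]$ there is an open $U$ containing no $n$-loop homotopic to $\alpha$. This is exactly the notion recalled before Corollary~\ref{co2.11}, equivalently $\pi_n^s(X, x_0) = \langle e \rangle$.

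Next I would identify condition $(iii)$ for this pair. Since $H = \{e\}$ is the trivial subgroup, the quotient map $q : G \to G/\{e\}$ is a bijection, and the quotient topology on $G/\{e\}$ coincides with the whisker topology on $\pi_n^{wh}(X, x_0)$; hence the coset space $\frac{G}{H}$ is, as a topological space, just $\pi_n^{wh}(X, x_0)$ itself. Moreover the ``homogeneous'' clause is automatic here by Remark~\ref{re2.4}, since every subgroup topology is homogeneous, so it imposes no extra requirement. Consequently condition $(iii)$ reduces to the single assertion that $\pi_n^{wh}(X, x_0)$ be Hausdorff. Combining these two identifications with the equivalence $(i) \Leftrightarrow (iii)$ of Theorem~\ref{th2.8} yields both implications of the corollary at once.

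I expect the only points meriting a word of care are these bookkeeping observations: that the quotient $G/\{e\}$ is, as a space, $\pi_n^{wh}(X, x_0)$, and that the homogeneity in $(iii)$ adds nothing. Neither constitutes a genuine obstacle, so the proof is essentially a verification that the specialized hypotheses and conclusion of Theorem~\ref{th2.8} match the terms of the corollary.
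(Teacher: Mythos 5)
Your proposal is correct and matches the paper's intent exactly: the paper records this corollary as the special case $G = \pi_n(X, x_0)$, $H = \{e\}$ of Theorem~\ref{th2.8}, and your unwinding of conditions $(i)$ and $(iii)$ for that pair (including the observation that $G/\{e\}$ carries the whisker topology itself) is the whole content. Nothing further is needed.
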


The whisker topology on homotopy groups depends on the choice of the base point, and the structure of $\pi_n^{wh}(X, x_0)$ may differ even in a path component. Brodskiy et al. \cite[Corollary 4.9]{BroDyd} introduced some spaces, called small loop transfer spaces, on which the topological structure of $\pi_1^{wh}(X, x_0)$ homeomorphically transfers by all paths. Pashaei et al. \cite{PasMas} generalized \textit{small loop transfer} path (SLT path for abbreviation), which was introduced in \cite[Definition 4.7]{BroDyd}. In the following, we intend to extend this notion to higher dimensions. For this purpose, we need to recall the isomorphism $\Gamma_{\gamma}: \pi_n(X, x_0) \to \pi_n(X, x_1)$ induced by a path $\gamma$ from $x_0$ to $x_1$. See \cite[Page 381]{Spa}.

\begin{definition}\label{de3.5}
Let $\gamma$ be a path from $x_0$ to $x_1$ in $X$. Then  for any $n$-loop $\alpha$ at $x_0$, $\gamma_{\#} (\alpha)$  is defined to be an $n$-loop at $x_1$, where $\beta: (\mathbb{I}^n, \dot{\mathbb{I}^n}) \to (X, x_1)$   has the rule $\beta = \beta' \circ r$, in which $\beta' : (\mathbb{I}^n \times \{0\}) \cup ( \dot{\mathbb{I}^n} \times \mathbb{I}) \to X$ is defined by $\beta'(u, 0) =  \alpha(u)$ if $u \in \mathbb{I}^n$, and $\beta'( u, t) = \gamma (t)$ if $u \in \dot{\mathbb{I}^n}$ and $t \in \mathbb{I}$, and $r: \mathbb{I}^n \times \mathbb{I} \to (\mathbb{I}^n \times \{0\}) \cup ( \dot{\mathbb{I}^n} \times \mathbb{I})$ is the stereographic retraction.
\end{definition}

\begin{theorem}[\cite{Spa}]\label{th2.13}
Let $X$ be a space, and let $x_0, x_1 \in X$. For any path $\gamma$ from $x_0$ to $x_1$, there exists an isomorphism of groups
$\Gamma_{\gamma}: \pi_n(X, x_0) \to \pi_n(X, x_1)$ defined by $\Gamma_{\gamma} ([\alpha]) =[\gamma_{\#} (\alpha)]$.
\end{theorem}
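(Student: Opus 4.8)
The plan is to verify three things in turn: that $\Gamma_\gamma$ is well defined on homotopy classes, that it is a group homomorphism, and that it is bijective with inverse $\Gamma_{\bar{\gamma}}$, where $\bar{\gamma}$ denotes the reverse path $\bar{\gamma}(t)=\gamma(1-t)$. Throughout, the essential tool is the stereographic retraction $r:\mathbb{I}^n \times \mathbb{I} \to (\mathbb{I}^n \times \{0\}) \cup (\dot{\mathbb{I}^n} \times \mathbb{I})$ of Definition \ref{de3.5}, which realizes $\beta$ as a free homotopy of $n$-cubes whose bottom face is $\alpha$, whose side faces trace out $\gamma$, and whose top face is the $n$-loop $\gamma_{\#}(\alpha)$ based at $x_1$. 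All the work will consist of building auxiliary maps on subpolyhedra of products of cubes and composing them with (products of) $r$.

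First I would show $\Gamma_\gamma$ is well defined. Suppose $H:\mathbb{I}^n \times \mathbb{I} \to X$ is a homotopy rel $\dot{\mathbb{I}^n}$ from $\alpha$ to $\alpha'$. I would repeat the prism construction with $H$ in place of $\alpha$: define a map on $(\mathbb{I}^n \times \{0\} \times \mathbb{I}) \cup (\dot{\mathbb{I}^n} \times \mathbb{I} \times \mathbb{I})$ that equals $H$ on the bottom and equals $\gamma$ (constant in the extra homotopy coordinate) on the side walls, and compose it with $r \times \mathrm{id}_{\mathbb{I}}$. Restricting the resulting map on $\mathbb{I}^n \times \mathbb{I} \times \mathbb{I}$ to the top face $\mathbb{I}^n \times \{1\} \times \mathbb{I}$ yields a based homotopy from $\gamma_{\#}(\alpha)$ to $\gamma_{\#}(\alpha')$. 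Hence $[\gamma_{\#}(\alpha)]$ depends only on $[\alpha]$, and $\Gamma_\gamma$ is well defined.

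Next, the homomorphism property. Since multiplication in $\pi_n$ is concatenation in the first cube coordinate, I must produce a based homotopy $\gamma_{\#}(\alpha * \alpha') \simeq \gamma_{\#}(\alpha) * \gamma_{\#}(\alpha')$. This is again a prism argument: placing the retraction construction separately over the two halves of the concatenated domain and reparametrizing gives a single map whose top face realizes the desired homotopy. The two pieces glue continuously because the copies of $\gamma$ agree along the shared partition wall, which maps entirely into the basepoint trace of $\gamma$. Therefore $\Gamma_\gamma([\alpha * \alpha'])=\Gamma_\gamma([\alpha])\,\Gamma_\gamma([\alpha'])$.

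Finally, bijectivity, where I expect the argument to become formal once two structural identities are in hand: $\Gamma_{c_{x_0}}=\mathrm{id}_{\pi_n(X,x_0)}$ for the constant path $c_{x_0}$, and $\Gamma_{\gamma * \delta}=\Gamma_\delta \circ \Gamma_\gamma$ for composable paths, both obtained by stacking and reparametrizing the prism construction, together with the fact that $\Gamma_\gamma$ depends only on the rel-endpoints homotopy class of $\gamma$ (proved exactly as in the well-definedness step, now deforming $\gamma$ along the side walls). Since $\gamma * \bar{\gamma}\simeq c_{x_0}$ and $\bar{\gamma}*\gamma\simeq c_{x_1}$ rel endpoints, these identities give $\Gamma_{\bar{\gamma}}\circ \Gamma_\gamma=\mathrm{id}$ and $\Gamma_\gamma \circ \Gamma_{\bar{\gamma}}=\mathrm{id}$, so $\Gamma_\gamma$ is an isomorphism. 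I expect the genuine technical burden to sit in the well-definedness and homomorphism steps, namely in writing the auxiliary maps on the relevant faces of $\mathbb{I}^n \times \mathbb{I} \times \mathbb{I}$ and checking continuity of their composites with $r$; the bijectivity then follows by the groupoid identities. As the statement is classical, in practice one may simply cite \cite{Spa}, but the above is the self-contained route.
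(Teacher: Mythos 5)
Your outline is correct and is essentially the standard textbook argument: well-definedness and the homomorphism property via prism constructions built from the retraction $r$, followed by bijectivity from the groupoid identities $\Gamma_{c_{x_0}}=\mathrm{id}$, $\Gamma_{\gamma*\delta}=\Gamma_{\delta}\circ\Gamma_{\gamma}$, and invariance of $\Gamma_{\gamma}$ under rel-endpoints homotopy of $\gamma$. The paper itself offers no proof and simply cites Spanier, whose argument follows the same route, so there is nothing to reconcile; your self-contained version fills in exactly what the citation delegates.
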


The isomorphism $\Gamma_{\gamma}: \pi_n^{wh} (X, x_0) \to \pi_n^{wh} (X, x_1)$ is  not necessarily continuous, but for some paths called SLT paths, $\Gamma_{\gamma}$ is continuous (see \cite[Lemma 4.6]{BroDyd}). Now, we generalize SLT path to $n$-SLT path for $n \ge 1$, in order to make $\Gamma_{\gamma}$ continuous.

\begin{definition}
Let $X$ be a space, $x_0, x_1 \in X$, and $n \ge 1$. A path $\gamma$ from $x_0$ to $x_1$ is called small $ n $-loop transfer (abbreviated to $ n $-SLT),  if for every open neighbourhood $U$ of $ x_0 $, there exists an open neighbourhood $ V $ of $x_1$ such that for every $ n $-loop $\beta: (\mathbb{I}^n, \dot{\mathbb{I}^n}) \to (V, x_1) $, there is an $n$-loop $\alpha :(\mathbb{I}^n, \dot{\mathbb{I}^n}) \to (U, x_0) $ which is homotopic to $\gamma_{\#}^{-1}(\beta)$.
\end{definition}

Brodskiy et al. \cite[Lemma 4.6]{BroDyd} proved that $\gamma_{\#} :\pi_1^{wh} (X, x_0) \to \pi_1^{wh} (X, x_1)$ is continuous if and only if $\gamma^{-1}$ is a $1$-SLT path from $x_0$ to $x_1$ . The analogous assertion holds for $n \ge 2$ as follows.

\begin{proposition}\label{pr2.15}
Let $ \gamma $ be a path in $ X $ from $ x_0 $ to $x_1$. Then $\Gamma_{\gamma} : \pi^{wh}_n(X, x_0)  \to \pi^{wh}_n(X, x_1)$ is continuous  if and only if $ \gamma^{-1} $ is an $ n $-SLT path.
\end{proposition}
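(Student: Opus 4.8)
The plan is to reduce the statement to continuity at the identity and then match that, clause by clause, with the $n$-SLT condition for $\gamma^{-1}$. First I would record that $\Gamma_{\gamma}$ is a group isomorphism by Theorem \ref{th2.13}, and that both $\pi_n^{wh}(X,x_0)$ and $\pi_n^{wh}(X,x_1)$ carry subgroup topologies, hence are left topological groups in which every translation is a self-homeomorphism (Remark \ref{re2.4}; for $n\ge 2$ they are genuine topological groups by Proposition \ref{pr2.4}). Writing $e_0=[c_{x_0}]$ and $e_1=[c_{x_1}]$ for the two identities and $l_g$ for translation by $g$, the relation $\Gamma_{\gamma}\circ l_g = l_{\Gamma_{\gamma}(g)}\circ\Gamma_{\gamma}$, valid because $\Gamma_{\gamma}$ is a homomorphism, conjugates continuity at $e_0$ into continuity at an arbitrary $g$. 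Hence $\Gamma_{\gamma}$ is continuous if and only if it is continuous at $e_0$.

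Next I would spell out continuity at $e_0$ using the explicit bases. A local basis at $e_1$ in $\pi_n^{wh}(X,x_1)$ is $\{\pi_n(i_U)\pi_n(U,x_1)\mid U\text{ open},\ x_1\in U\}$, and similarly at $e_0$. Since $\Gamma_{\gamma}(e_0)=e_1$, continuity at $e_0$ reads: for every open neighbourhood $U$ of $x_1$ there is an open neighbourhood $V$ of $x_0$ with
\[
\Gamma_{\gamma}\bigl(\pi_n(i_V)\pi_n(V,x_0)\bigr)\subseteq \pi_n(i_U)\pi_n(U,x_1);
\]
call this containment $(\ast)$. Here I would use that $\pi_n(i_V)\pi_n(V,x_0)$ is exactly the set of classes $[\beta]$ represented by $n$-loops $\beta$ in $(V,x_0)$, so that $(\ast)$ says precisely that each such $[\beta]$ is sent by $\Gamma_{\gamma}$ to a class representable by an $n$-loop in $(U,x_1)$.

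Finally I would identify $(\ast)$ with the $n$-SLT condition for $\gamma^{-1}$. The key algebraic fact is $\Gamma_{\gamma^{-1}}=\Gamma_{\gamma}^{-1}$: since $\gamma*\gamma^{-1}$ is nullhomotopic rel endpoints and the assignment $\gamma\mapsto\Gamma_{\gamma}$ is functorial under concatenation, one has $\Gamma_{\gamma^{-1}}\circ\Gamma_{\gamma}=\Gamma_{\gamma*\gamma^{-1}}=\mathrm{id}$, whence on homotopy classes $[(\gamma^{-1})_{\#}^{-1}(\beta)]=\Gamma_{\gamma}([\beta])=[\gamma_{\#}(\beta)]$. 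Unravelling the definition of an $n$-SLT path for $\gamma^{-1}$, which runs from $x_1$ to $x_0$: for every open neighbourhood $U$ of $x_1$ there is an open neighbourhood $V$ of $x_0$ so that each $n$-loop $\beta$ in $(V,x_0)$ has $(\gamma^{-1})_{\#}^{-1}(\beta)$ homotopic to an $n$-loop in $(U,x_1)$; by the identification above this is exactly $\Gamma_{\gamma}([\beta])\in\pi_n(i_U)\pi_n(U,x_1)$, that is, exactly $(\ast)$. Reading the equivalence in both directions then finishes the argument.

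The step I expect to demand the most care is the bookkeeping in this last paragraph: keeping straight over which basepoint each $n$-loop lives and correctly matching $(\gamma^{-1})_{\#}^{-1}$ with $\gamma_{\#}$ through $\Gamma_{\gamma^{-1}}=\Gamma_{\gamma}^{-1}$, since reversing the path would interchange the roles of $U$ and $V$ and destroy the correspondence. By contrast, the reduction to continuity at $e_0$ is routine once the homogeneity of the subgroup topology (Remark \ref{re2.4}) is invoked.
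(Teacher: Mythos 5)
Your argument is correct and follows essentially the same route as the paper's proof: reduce continuity to continuity at the identity via homogeneity of the subgroup topology, rewrite that as a containment of basic cosets $\Gamma_{\gamma}\bigl(\pi_n(i_V)\pi_n(V,x_0)\bigr)\subseteq \pi_n(i_U)\pi_n(U,x_1)$, and identify this with the $n$-SLT condition for $\gamma^{-1}$ via $(\gamma^{-1})_{\#}^{-1}(\beta)\simeq \gamma_{\#}(\beta)$. The only cosmetic difference is that you verify the reduction to continuity at the identity directly through the translation identity $\Gamma_{\gamma}\circ l_g = l_{\Gamma_{\gamma}(g)}\circ\Gamma_{\gamma}$, whereas the paper cites Arhangel'skii--Tkachenko for this standard fact.
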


\begin{proof}
By Theorem \ref{th2.13} $\Gamma_{\gamma}:  \pi_n^{wh} (X, x_0) \to \pi_n^{wh}(X, x_1)$ is an isomorphism of groups. Since the whisker topology on the $n$th homotopy group makes it a left topological group, continuity of homomorphisms is equivalent to continuity at the identity \cite[Proposition 1.3.4, Page 19]{ArhTka}. Thus $\Gamma_{\gamma}$ is continuous if and only if it is continuous at the identity. By definition of the whisker topology, the set $\{\pi_n(i_2) \pi_n(V, x_1)|\ V$ is an open neighbourhood of $x_1\}$ is a local basis at the identity of $\pi_n^{wh} (X, x_1)$. Thus, $\Gamma$ is continuous at the identity if and only if for any open neighbourhood $V$ of $x_1$, $\Gamma_{\gamma}^{-1} \big( \pi_n(i_2) \pi_n(V, x_0)\big)$ is open in $\pi_n^{wh} (X, x_0)$. Again, since $\{\pi_n(i_1) \pi_n(U, x_0)|\ U$ is an open neighbourhood of $x_0\}$ is a local basis at the identity of $\pi_n^{wh}(X, x_0)$, $\Gamma_{\gamma}$ is continuous at the identity if and only if for any open neighbourhood $V$ of $x_1$, there is an open neighbourhood $U$ of $x_0$ such that $\Gamma_{\gamma} \big(\pi_n(i_1) \pi_n(U, x_0)\big) \subseteq \big( \pi_n(i_2) \pi_n(V, x_0)\big)$. That is for any $n$-loop $\alpha$ in $U$ at $x_0$, there is an $n$-loop $\beta$ in $V$ at $x_1$, such that $\Gamma_{\gamma} ([\alpha]) = [\beta]$. Since $\Gamma_{\gamma} ([\alpha]) = [\gamma_{\#} (\alpha)]$, $\beta$ is homotopic to $\gamma_{\#} (\alpha)$. Equivalently, since $\gamma_{\#} (\alpha) \simeq {\gamma^{-1}}_{\#}^{-1} (\alpha)$, $\beta$ is homotopic to ${\gamma^{-1}}_{\#}^{-1} ( \alpha)$. Therefore, $\Gamma_{\gamma}$ is continuous if and only if for any open neighbourhood $V$ of $x_1$, there is an open neighbourhood $U$ of $x_0$, such that for every $n$-loop $\alpha$ in $U$ at $x_0$, there is an $n$-loop $\beta$ in $V$ at $x_1$ homotopic to ${\gamma^{-1}}_{\#}^{-1} (\alpha)$, or equivalently, $\gamma^{-1}$ is an $n$-SLT path from $x_1$ to $x_0$.
\end{proof}

Proposition \ref{pr2.15} implies the following corollary.

\begin{corollary}\label{co2.7}
Let $X$ be a space, $x_0, x_1 \in X$, and $n \ge 2$. If there is a path $ \gamma $ from $ x_0 $ to $ x_1 $ such that $ \gamma $ and $ \gamma^{-1} $ are $ n $-SLT paths, then $ \pi^{wh}_n(X,x_0) $ and $ \pi^{wh}_n(X,x_1) $ are isomorphic as topological groups.
\end{corollary}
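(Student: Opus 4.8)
The plan is to show that the group isomorphism $\Gamma_{\gamma}\colon \pi_n(X,x_0)\to\pi_n(X,x_1)$ of Theorem~\ref{th2.13} is in fact a homeomorphism, so that together with Proposition~\ref{pr2.4}---which guarantees that for $n\ge 2$ both $\pi_n^{wh}(X,x_0)$ and $\pi_n^{wh}(X,x_1)$ are genuine topological groups---it becomes an isomorphism of topological groups. Since $\Gamma_{\gamma}$ is already known to be a bijective homomorphism, it suffices to prove that both $\Gamma_{\gamma}$ and its set-theoretic inverse are continuous with respect to the whisker topologies.

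For the continuity of $\Gamma_{\gamma}$ itself I would simply invoke Proposition~\ref{pr2.15}: by hypothesis $\gamma^{-1}$ is an $n$-SLT path, and that proposition says precisely that $\Gamma_{\gamma}\colon\pi_n^{wh}(X,x_0)\to\pi_n^{wh}(X,x_1)$ is continuous if and only if $\gamma^{-1}$ is an $n$-SLT path. Hence $\Gamma_{\gamma}$ is continuous.

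The remaining task---and the one point that is not immediate from the cited propositions---is to handle the inverse map. Here I would first identify $\Gamma_{\gamma}^{-1}$ with the isomorphism $\Gamma_{\gamma^{-1}}$ induced by the reverse path $\gamma^{-1}$ (a path from $x_1$ to $x_0$). This rests on the standard functorial properties of the change-of-basepoint isomorphism: that $\Gamma$ depends only on the rel-endpoint homotopy class of its path, that it is compatibly multiplicative under concatenation, and that the constant path induces the identity. Applying these to $\gamma*\gamma^{-1}\simeq c_{x_0}$ and $\gamma^{-1}*\gamma\simeq c_{x_1}$ yields $\Gamma_{\gamma^{-1}}\circ\Gamma_{\gamma}=\mathrm{id}$ and $\Gamma_{\gamma}\circ\Gamma_{\gamma^{-1}}=\mathrm{id}$, so that $\Gamma_{\gamma}^{-1}=\Gamma_{\gamma^{-1}}$. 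Once this identification is in place, continuity of the inverse follows from a second application of Proposition~\ref{pr2.15}, now with the roles of $x_0$ and $x_1$ interchanged: $\Gamma_{\gamma^{-1}}$ is continuous if and only if $(\gamma^{-1})^{-1}=\gamma$ is an $n$-SLT path, which is exactly the other half of the hypothesis.

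Putting the three steps together, $\Gamma_{\gamma}$ is a continuous group isomorphism whose inverse $\Gamma_{\gamma^{-1}}$ is also continuous, hence a homeomorphism, and therefore an isomorphism of topological groups. I expect the main obstacle to be the bookkeeping in the middle step: verifying the identity $\Gamma_{\gamma}^{-1}=\Gamma_{\gamma^{-1}}$ cleanly, including the care needed with the concatenation and ordering conventions built into the definition of $\gamma_{\#}$ in Definition~\ref{de3.5}, rather than any genuinely new topological input. Everything else reduces to two symmetric invocations of Proposition~\ref{pr2.15}.
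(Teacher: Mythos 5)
Your proposal is correct and follows exactly the route the paper intends: the paper offers no written proof beyond ``Proposition \ref{pr2.15} implies the following corollary,'' and the intended argument is precisely your double application of Proposition \ref{pr2.15} (to $\gamma$ and to $\gamma^{-1}$), together with the standard identification $\Gamma_{\gamma}^{-1}=\Gamma_{\gamma^{-1}}$ and Proposition \ref{pr2.4} to upgrade the statement from left topological groups to topological groups.
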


\section{Relationship Between $L_n(X, x_0)$ and $\pi_n^{wh} (X,x_0)$}\label{sec4}

Let $\varphi: \mathcal{H}_n(X, x_0) \to \prod_{\aleph_0} \pi_n(X, x_0)$ be the homomorphism (I). In this section, we study the homomorphic image of the Hawaiian group, by the homomorphism $\varphi$, and its relation to the whisker topology on homotopy groups.

 It is shown that $L_n(X,x_0)$, introduced in \cite[Definition 2.6]{1}, is equal to $Im (\varphi)$ and hence it is a subgroup of $\prod_{\aleph_0}\pi_n(X,x_0)$, for each pointed space $(X,x_0)$. Note that the structure of $L_n(X,x_0)$ does not depend only on $\pi_n(X,x_0)$. In the following example, for every $n \ge 1$ we present two  pointed spaces $(X, x_0)$ and $(Y, y_0)$  with $\pi_n (X,x_0) \cong \pi_n(Y, y_0)$, but $L_n(X,x_0) \not \cong L_n(Y,y_0)$.

\begin{example}\label{ex0.5}
Let $n\ge 1$. Put $Y$ the Eilenberg-MacLane space with $\pi_n(Y,y_0) \cong \prod_{\aleph_0} \mathbb{Z}$ and $X= \prod_{\aleph_0} \mathbb{S}^n$. If $x_0 \in X$, then
\[
\pi_n(X,x_0) \cong \prod_{\aleph_0} \pi_n(\mathbb{S}^n, 1) \cong \prod_{\aleph_0} \mathbb{Z} \cong \pi_n(Y,y_0).
\]
Since $Y$ is locally $n$-simply connected at $y_0$, $\mathcal{H}_n(Y,y_0) \cong L_n(Y,y_0) \cong \prod_{\aleph_0}^W \pi_n(Y,y_0)$ (see \cite[Theorem 1]{KarRep}), and therefore, $L_n(Y,y_0) \cong \prod_{\aleph_0}^W \prod_{\aleph_0} \mathbb{Z}$.

By a straightforward argument, one can prove that $L_n$ preserves the products, for all $n \ge 1$. Thus,  $L_n(X,x_0) \cong \prod_{\aleph_0} L_n(\mathbb{S}^n, 1)$. Since $\mathbb{S}^n$ is locally $n$-simply connected at $1$, $\mathcal{H}_n(\mathbb{S}^n, 1) \cong L_n(\mathbb{S}^n, 1) \cong \prod_{\aleph_0}^W \pi_n(\mathbb{S}^n , 1)$ (see \cite[Theorem 1]{KarRep}). Therefore, $L_n(X,x_0) \cong \prod_{\aleph_0} \prod_{\aleph_0}^W \mathbb{Z}$.

Note that $\prod_{\aleph_0} \prod_{\aleph_0}^W \mathbb{Z} \not \cong \prod_{\aleph_0}^W \prod_{\aleph_0} \mathbb{Z}$ (see \cite{Zim}), and hence, $L_n(X,x_0) \not \cong L_n(Y, y_0)$.
\end{example}

Example \ref{ex0.5} shows that the algebraic structure of $\pi_n(X, x_0)$ does not determine the structure of $L_n(X,x_0)$. But in Theorem \ref{th2.3n}, we will see that the whisker topology on $\pi_n(X, x_0)$ can exactly characterize $L_n(X,x_0)$. The following theorem manifests the relation between $L_n(X, x_0)$ and $\pi_n^{wh}(X, x_0)$.

\begin{theorem}\label{th2.3}
Let $(X, x_0)$ be a pointed space and $n \ge 1$.
Then $L_n(X, x_0)$ is equal to the set of all sequences converging to the identity in $\pi_n^{wh} (X,x_0)$.
\end{theorem}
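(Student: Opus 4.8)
The plan is to prove the two inclusions separately by unwinding the definition of the whisker topology at the identity. Recall that the collection $\{\pi_n(i)\pi_n(U,x_0)\mid U\text{ open},\ x_0\in U\}$ is a local basis at the identity $e$ of $\pi_n^{wh}(X,x_0)$, and that a class $[g]$ lies in $\pi_n(i)\pi_n(U,x_0)$ precisely when $g$ is homotopic (relative to the base point) to an $n$-loop whose image is contained in $U$. Consequently, a sequence $\{[g_k]\}$ in $\pi_n^{wh}(X,x_0)$ converges to $e$ if and only if for every open neighbourhood $U$ of $x_0$ there is an index $N$ with $[g_k]\in\pi_n(i)\pi_n(U,x_0)$ for all $k\ge N$; equivalently, all but finitely many of the classes $[g_k]$ admit a representative with image inside $U$. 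The whole proof then reduces to matching this reformulated convergence condition with the defining property of $L_n(X,x_0)$.

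For the inclusion of $L_n(X,x_0)$ into the set of sequences converging to $e$, I would use the description $L_n(X,x_0)=im(\varphi)$. Given $\{[f_k]\}=\varphi([f])$ for some $f:(\mathbb{HE}^n,\theta)\to(X,x_0)$, fix an open neighbourhood $U$ of $x_0$. Since $f$ is continuous and the spheres $\mathbb{S}_k^n$ shrink to $\theta$, the open set $f^{-1}(U)$ contains a neighbourhood of $\theta$ and hence all but finitely many of the $\mathbb{S}_k^n$; therefore $im(f|_{\mathbb{S}_k^n})\subseteq U$, so $[f_k]=[f|_{\mathbb{S}_k^n}]\in\pi_n(i)\pi_n(U,x_0)$, for all but finitely many $k$. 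By the reformulation of the first paragraph this is exactly the statement that $\{[f_k]\}$ converges to $e$.

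For the reverse inclusion, let $\{[g_k]\}$ converge to $e$. By the first paragraph, for each open $U\ni x_0$ all but finitely many $[g_k]$ are representable by $n$-loops with image in $U$. To conclude $\{[g_k]\}\in L_n(X,x_0)$ I would realize the sequence as $\varphi([f])$ for a single map $f$ on $\mathbb{HE}^n$: choosing, for each $k$, a representative $\tilde g_k$ with suitably small image and setting $f|_{\mathbb{S}_k^n}=\tilde g_k$ yields a map that is automatically continuous away from $\theta$, and continuous at $\theta$ precisely because the images of the $\tilde g_k$ are eventually inside every $U$. The hard part will be this selection of compatible representatives, namely turning the neighbourhood-by-neighbourhood information furnished by convergence into one sequence of representatives whose images genuinely shrink to $x_0$, so that the assembled map on the Hawaiian earring is continuous at the wedge point. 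This is the main obstacle, and it is exactly the content already packaged in the identity $L_n(X,x_0)=im(\varphi)$ of \cite[Theorem 2.7]{1}, which I would invoke to finish.
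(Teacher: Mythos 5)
Your first two paragraphs are fine and agree with the paper's own argument: the reformulation of convergence to $e$ via the local basis $\{\pi_n(i)\pi_n(U,x_0)\}$ is correct, and the inclusion of $L_n(X,x_0)$ into the set of sequences converging to $e$ follows exactly as you say. The gap is in the reverse inclusion, at the very step you yourself call the main obstacle, and the way you propose to close it does not work. The identity $L_n(X,x_0)=im(\varphi)$ of \cite[Theorem 2.7]{1} is the equivalence between sequences of classes that \emph{already come equipped with} a null-convergent sequence of representatives (a single choice $\tilde g_k$ such that, for every open $U\ni x_0$, $im(\tilde g_k)\subseteq U$ for all but finitely many $k$) and sequences of the form $\varphi([f])$; that is, it is precisely the assembly/restriction step that you describe as ``automatic''. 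It does not produce the representatives. What convergence to $e$ gives you is weaker: for each $U$ \emph{separately}, almost all $[g_k]$ admit some representative, depending on $U$, with image in $U$. Passing from this neighbourhood-by-neighbourhood data to one sequence of representatives that works for all $U$ simultaneously is the actual content of this direction of the theorem, and it is contained neither in \cite[Theorem 2.7]{1} nor in the definition of $L_n$ --- indeed membership in $L_n$ is \emph{defined} by the existence of such a uniform choice, so invoking it here is circular.

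To close the gap you need an actual selection argument. When $X$ has a countable local basis at $x_0$ this is a straightforward diagonalization: take a decreasing basis $U_1\supseteq U_2\supseteq\cdots$, let $K_m$ be such that $[g_k]\in\pi_n(i)\pi_n(U_m,x_0)$ for all $k\ge K_m$, and for each $k$ pick a representative $\tilde g_k$ of $[g_k]$ with image in $U_{m(k)}$, where $m(k)=\max\{m\le k:\ K_m\le k\}$ (taking $\tilde g_k$ arbitrary when this set is empty); since $m(k)\ge m$ once $k\ge\max(m,K_m)$, the images of the $\tilde g_k$ are eventually inside every $U_m$, hence inside every neighbourhood of $x_0$. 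Without some such hypothesis the step genuinely needs justification, since a priori the representative witnessing $[g_k]\in\pi_n(i)\pi_n(U,x_0)$ can change as $U$ shrinks, and nothing forces a single representative to work for all $U$ at once. For comparison: the paper's own proof slides over this same point, by asserting that $im(\beta_k)\subseteq U$ ``if and only if'' $\beta_k$ is homotopic to an $n$-loop in $U$ --- the ``if'' direction of that assertion is exactly the unproved selection. So you have correctly located the difficulty; what is missing is an argument for it, not the citation you propose.
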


\begin{proof}
A sequence $\{[\alpha_k]\}_{\aleph_0}$ belongs to $L_n(X,x_0)$ if and only if there exists null-convergent sequence $\{\beta_k\}_{\aleph_0}$ with $\alpha_k \simeq \beta_k$ for every $k \in \mathbb{N}$. A sequence $\{\beta_k\}_{\aleph_0}$ is null-convergent if and only if for each open set $U$ of $x_0$ there exists $K\in \mathbb{N}$ such that if $k \ge K$, then $im (\beta_k) \subseteq U$. Recall that $im (\beta_k) \subseteq U$ if and only if there exists $\gamma:(\mathbb{S}^n,1) \to (U, x_0)$ such that $\beta_k \simeq i \circ \gamma$, where $i: U \to X$ is the inclusion map.  Hence, $\{\beta_k\}_{\aleph_0}$ is null-convergent if and only if there exists $K \in \mathbb{N}$ such that if $k \ge K$, then $[\beta_k] \in \{[i \circ \gamma]| \gamma$ is an $n$-loop at $x_0$ in $U\} = \pi_n(i) \pi_n(U,x_0)$, or equivalently $[\alpha_k] \in \pi_n (i) \pi_n(U, x_0)$.

Therefore, $\{[\alpha_k]\}_{\aleph_0} \in L_n(X,x_0)$ if and only if for each open set $U$ of $x_0$, there exists $K\in \mathbb{N}$ such that if $k \ge K$, then $[\alpha_k] \in \pi_n(i) \pi_n(U,x_0)$. Since the set $\{\pi_n(i) \pi_n(U,x_0)|\ U$ is an open subset of  $x_0 \}$ forms a local basis for the whisker topology on $\pi_n (X,x_0)$ at the identity, $\{[\alpha_k]\}_{\aleph_0} \in L_n(X,x_0)$ if and only if $\{[\alpha_k]\}_{\aleph_0}$ converges to the identity in $\pi_n^{wh}(X, x_0)$.
\end{proof}

Recall that by the definition of whisker topology on the $n$th homotopy group of pointed space $(X, x_0)$, $\pi_n^{wh}(X, x_0)$ is indiscrete if and only if all $n$-loops in $X$ at $x_0$ are small. Also by Proposition 3.1 $\pi_n^{wh}(X, x_0)$ is discrete if and only if $X$ is semi-locally $n$-simply connected at $x_0$.

\begin{corollary}\label{co3.3n}
Let $X$ be a space having a countable local basis at $x_0$.
\begin{enumerate}
\item
$X$ is semi-locally $n$-simply connected at $x_0$ if and only if
$L_n(X, x_0) = \prod^W_{\aleph_0} \pi_n(X, x_0)$.
\item
All $n$-loops at $x_0$ are small if and only if $L_n(X, x_0) = \prod_{\aleph_0} \pi_n(X, x_0)$.
\end{enumerate}
\end{corollary}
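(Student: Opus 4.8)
The plan is to reduce both equivalences to the convergence description of $L_n(X,x_0)$ furnished by Theorem \ref{th2.3}, together with the characterizations of discreteness and indiscreteness of $\pi_n^{wh}(X,x_0)$. By Theorem \ref{th2.3}, $L_n(X,x_0)$ is precisely the set of sequences in $\pi_n(X,x_0)$ that converge to the identity in the whisker topology, so in each part I only need to identify which sequences converge to the identity and match this set against the weak product $\prod^W_{\aleph_0}\pi_n(X,x_0)$ (the sequences that are eventually trivial) or against the full product $\prod_{\aleph_0}\pi_n(X,x_0)$.

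For part (1), I would argue as follows. If $X$ is semilocally $n$-simply connected at $x_0$, then $\pi_n^{wh}(X,x_0)$ is discrete by Proposition \ref{pro2.3}; in a discrete group a sequence converges to the identity if and only if it is eventually trivial, so the convergent-to-identity sequences are exactly the elements of $\prod^W_{\aleph_0}\pi_n(X,x_0)$, and Theorem \ref{th2.3} gives $L_n(X,x_0)=\prod^W_{\aleph_0}\pi_n(X,x_0)$. For the converse I would argue by contraposition: if $X$ is not semilocally $n$-simply connected at $x_0$, then $\pi_n(i)\pi_n(U,x_0)\neq\{e\}$ for every open neighbourhood $U$ of $x_0$. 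Here I invoke the countable local basis at $x_0$: fixing a decreasing countable local basis $U_1\supseteq U_2\supseteq\cdots$, for each $k$ I choose a nontrivial class $[\alpha_k]\in\pi_n(i)\pi_n(U_k,x_0)$ represented by an $n$-loop lying in $U_k$. The resulting sequence $\{[\alpha_k]\}$ is null-convergent, hence belongs to $L_n(X,x_0)$ by Theorem \ref{th2.3}, yet it is not eventually trivial, so it lies outside $\prod^W_{\aleph_0}\pi_n(X,x_0)$; this shows $L_n(X,x_0)\neq\prod^W_{\aleph_0}\pi_n(X,x_0)$.

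For part (2), I would use the remark preceding the corollary that $\pi_n^{wh}(X,x_0)$ is indiscrete if and only if $\pi_n^s(X,x_0)=\pi_n(X,x_0)$, i.e.\ every $n$-loop at $x_0$ is small. If every $n$-loop is small then the topology is indiscrete, every sequence converges to the identity, and Theorem \ref{th2.3} yields $L_n(X,x_0)=\prod_{\aleph_0}\pi_n(X,x_0)$. Conversely, if $L_n(X,x_0)=\prod_{\aleph_0}\pi_n(X,x_0)$, then for an arbitrary class $[\alpha]$ the constant sequence $([\alpha],[\alpha],\dots)$ lies in $L_n(X,x_0)$ and therefore converges to the identity; being constant, it is contained in every basic neighbourhood $\pi_n(i)\pi_n(U,x_0)$ of the identity, so $[\alpha]$ lies in the infinitesimal subgroup $\bigcap_U\pi_n(i)\pi_n(U,x_0)=\pi_n^s(X,x_0)$. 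Hence $[\alpha]$ is small, and since $[\alpha]$ was arbitrary every $n$-loop is small.

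The only place the countable local basis hypothesis is genuinely needed is the converse of part (1): the implication ``convergent-to-identity coincides with eventually-trivial forces discreteness'' is a first-countability phenomenon, and the hypothesis is exactly what lets me realise a witnessing sequence indexed by $\mathbb{N}$ out of the nested neighbourhoods $U_k$. I expect this to be the main obstacle, whereas both directions of part (2) and the forward direction of part (1) go through without any countability assumption, using only the discrete/indiscrete dichotomy together with the behaviour of constant and eventually-trivial sequences.
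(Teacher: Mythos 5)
Your proof is correct and follows essentially the same route as the paper's: both parts reduce to Theorem \ref{th2.3} together with the discreteness criterion of Proposition \ref{pro2.3} and the characterization of indiscreteness via the infinitesimal subgroup $\pi_n^s(X,x_0)$. The only difference is presentational: where the paper invokes the abstract fact that a first-countable homogeneous space is discrete iff every convergent sequence is eventually constant, you unpack that fact by explicitly building a non-eventually-trivial null sequence from a nested countable basis, which also makes visible (correctly) that the countable local basis is needed only for the converse of part (1).
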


\begin{proof}
Since $X$ has a countable local basis at $x_0$, $\pi_n^{wh}(X, x_0)$ is first countable, by Lemma \ref{le2.3}. 
\begin{enumerate}
\item
 $\pi_n^{wh} (X, x_0)$ is discrete if and only if every convergent sequence is eventually constant. Since $\pi_n^{wh}(X, x_0)$ is a left topological group, every convergent sequence is obtained by some left translation from a sequence converging to the identity. Hence by Theorem \ref{th2.3} the result holds.

\item
If $\pi_n^{wh} (X, x_0)$ is indiscrete, then all sequences are convergent. Hence, all sequences in $\pi_n^{wh}(X, x_0)$ converge to the identity.  By Theorem \ref{th2.3}, $L_n(X, x_0)$ equals the set of convergent sequences to the identity of $\pi_n^{wh}(X, x_0)$, and then $L_n(X, x_0) = \prod_{\aleph_0} \pi_n(X, x_0)$.

Conversely, if $L_n(X, x_0) = \prod_{\aleph_0} \pi_n(X, x_0)$, then all sequences converge to the identity in $\pi_n^{wh}(X, x_0)$. It is equivalent to $\pi_n^{wh}(X, x_0)$ be indiscrete at the identity. Since $\pi_n^{wh}(X, x_0)$ is a left topological group, it is indiscrete at every point.
\end{enumerate}
\end{proof}
Note that the $n$-Hawaiian earring space, $\mathbb{HE}^n$, does not belong to the two classes of Corollary \ref{co3.3n}, and hence $\pi_n^{wh} (\mathbb{HE}^n,  \theta_0)$ is not discrete nor indiscrete. The $n$-Hawaiian earring space was generalized to $n$-Hawaiian like spaces by Ghane et al. \cite{Ghane} as a specified topology on disjoint union of CW spaces with a common point as follows.
\begin{definition}[\cite{Ghane}]
Let $\{X_i\}_{i \in \mathbb{N}}$ be a family of topological spaces. Suppose that the underlying set of $\widetilde{\bigvee}_{i \in \mathbb{N}} X_i$ is the disjoint union of $X_i$'s with exactly one point $x_*$ in common, equipped with a topology generated by the neighbourhood bases as follows.
\begin{enumerate}
\item
If $x \in X_i \setminus \{x_*\}$, then the neighbourhood basis of $\widetilde{\bigvee}_{i \in \mathbb{N}} X_i$ at $x$ is the one of $X_i$, $i \in \mathbb{N}$.
\item
At point $x_*$, the neighbourhood basis consists of sets of the form $\bigcup_{i \in \mathbb{N} \setminus F} X_i \cup \bigcup_{i \in F} U_i$, where $F$ is a finite set of natural numbers and $U_i$ is an open neighbourhood of $x_*$ in $X_i$.
\end{enumerate}
The space $\widetilde{\bigvee}_{i \in \mathbb{N}} X_i$ is called an $n$-Hawaiian like space, when $X_i$'s are all $(n-1)$-connected compact CW spaces.
\end{definition}
Let $\pi_n^{qtop}(X, x_*)$ denote the quasi-topological $n$th homotopy group induced by the compact-open topology on the $n$-loop space $\Omega_n(X, x_*)$ (see \cite{GhaMas}). If $X= \widetilde{\bigvee}_{i \in \mathbb{N}} X_i$ is an $n$-Hawaiian like space, then for $n \ge 2$, it was shown in \cite[Theorem 1.1]{Ghane} that $\pi_n(X, x_*) \cong \prod_{i \in \mathbb{N}} \pi_n(X_i, x_*)$ and that $\pi_n^{qtop} (X, x_*)$ is isomorphic to the prodiscrete topological group $\prod_{i \in \mathbb{N}} \pi_n(X_i, x_*)$. In \cite[Theorem 3.3]{Ghane}, Ghane et al. proved that the topologies of $\pi_n^{lim}(X, x_*)$ and $\pi_n^{qtop}(X, x_*)$ coincide if $X$ is an $n$-Hawaiian like space.

Let $\{X_i\}_{i \in \mathbb{N}}$ be a family of spaces each of which is Tychonoff, $(n-1)$-connected, locally strongly contractible and first countable at $x_i$.
We call $X = \widetilde{\bigvee}_{i \in \mathbb{N}} X_i$, the compact union of the above family, the generalized $n$-Hawaiian like space. In \cite[Theorem 1.1]{EdaKaw}, it was proved that for $n \ge 2$, $\pi_n(X, x_*) \cong \prod_{\mathbb{N}} \pi_n(X_i, x_*)$.
In  the following proposition, we show that for generalized $n$-Hawaiian like spaces, the topology of $\pi_n^{wh} (X, x_*)$ is prodiscrete.

\begin{proposition}\label{pr4.5}
If $X = \bigvee_{i \in \mathbb{N}} X_i$ is a generalized $n$-Hawaiian like space and $x_*$ is the common point, $n \ge 2$, then $\pi_n^{wh}(X, x_*)$ is isomorphic to the prodiscrete topological group $\prod_{i \in \mathbb{N}} \pi_n(X_i, x_*)$.
\end{proposition}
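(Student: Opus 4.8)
The plan is to exhibit the algebraic isomorphism $\Phi\colon \pi_n(X,x_*)\to\prod_{i\in\mathbb{N}}\pi_n(X_i,x_*)$ supplied by \cite[Theorem 1.1]{EdaKaw} and to verify that it is a homeomorphism between the whisker topology and the prodiscrete topology. Concretely, for each $i$ let $r_i\colon X\to X_i$ be the retraction collapsing $\bigcup_{j\neq i}X_j$ to $x_*$. One first checks that each $r_i$ is continuous (the preimage of a basic neighbourhood of $x_*$ in $X_i$ is a basic neighbourhood of $x_*$ in $X$), so that $\Phi([\alpha])=([r_i\circ\alpha])_i$ is well defined and, by \cite[Theorem 1.1]{EdaKaw}, a group isomorphism. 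Since $n\ge 2$, $\pi_n^{wh}(X,x_*)$ is a topological group by Proposition \ref{pr2.4}, and the prodiscrete product of discrete groups is a topological group; hence it suffices to prove that $\Phi$ and $\Phi^{-1}$ are continuous at the identity. For this I record the two local bases at the identity: for the whisker topology the sets $\pi_n(j)\pi_n(U,x_*)$ with $U=\bigcup_{i\notin F}X_i\cup\bigcup_{i\in F}U_i$ (where $F\subseteq\mathbb{N}$ is finite, $U_i$ open in $X_i$, and $j\colon U\hookrightarrow X$ the inclusion), and for the prodiscrete topology the subgroups $N_F=\{(g_i):g_i=e_i\text{ for all }i\in F\}$.

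For continuity of $\Phi$ at the identity, fix a finite $F$ and, using local strong contractibility of each $X_i$ at $x_*$, choose for every $i\in F$ an open $U_i\ni x_*$ whose inclusion into $X_i$ is null-homotopic, so that $\pi_n(\mathrm{incl})\colon\pi_n(U_i,x_*)\to\pi_n(X_i,x_*)$ is trivial. Put $U=\bigcup_{i\notin F}X_i\cup\bigcup_{i\in F}U_i$. If $\alpha$ is an $n$-loop with image in $U$, then for $i\in F$ one has $r_i(U)=U_i$, hence $[r_i\circ\alpha]$ lies in the trivial image $\pi_n(\mathrm{incl})\pi_n(U_i,x_*)=\{e_i\}$; therefore $\Phi\big(\pi_n(j)\pi_n(U,x_*)\big)\subseteq N_F$, which gives continuity.

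The main obstacle is the continuity of $\Phi^{-1}$ at the identity, equivalently the openness of $\Phi$, because this demands producing, for a prescribed homotopy class, a representative confined to a given basic neighbourhood. Given a basic whisker neighbourhood $\pi_n(j)\pi_n(U,x_*)$, I would shrink $U$ to a basic $U'=\bigcup_{i\notin F}X_i\cup\bigcup_{i\in F}U_i\subseteq U$ with $F$ finite and claim $N_F\subseteq\Phi\big(\pi_n(j)\pi_n(U',x_*)\big)$. For $(g_i)\in N_F$ I would build a representative of $\Phi^{-1}\big((g_i)\big)$ by the infinite-concatenation construction underlying \cite[Theorem 1.1]{EdaKaw}: choose representatives $\beta_i$ of $g_i$ supported on a null sequence of pairwise disjoint $n$-disks $D_i$ accumulating at a single point $p_0\mapsto x_*$, and send the complement to $x_*$. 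Continuity of the resulting map $\beta$ at $p_0$ is exactly where the shrinking-neighbourhood topology of $X$ enters, since every neighbourhood of $x_*$ contains all but finitely many $X_i$ entirely. Because $g_i=e_i$ for $i\in F$, I may take $\beta_i$ constant on $D_i$ for these finitely many indices, so that $\mathrm{im}(\beta)\subseteq\{x_*\}\cup\bigcup_{i\notin F}X_i\subseteq U'$; since $r_i\circ\beta\simeq\beta_i$, we obtain $\Phi([\beta])=(g_i)$ and $[\beta]\in\pi_n(j)\pi_n(U',x_*)$, proving the claim.

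I expect the genuinely delicate points to be making the concatenation construction rigorous—verifying that $\beta$ is continuous and that its class is really $\Phi^{-1}\big((g_i)\big)$—and confirming that the standing hypotheses (Tychonoff, $(n-1)$-connected, first countable, locally strongly contractible) are precisely those that \cite[Theorem 1.1]{EdaKaw} uses for the product decomposition. Here local strong contractibility is the ingredient that, through the mechanism of Proposition \ref{pro2.3}, lets small $n$-loops inside each $X_i$ be discarded, while first countability and the Tychonoff property keep the accumulation behaviour at $x_*$ under control; together these ensure that both inclusions above hold and that $\Phi$ is the desired topological isomorphism.
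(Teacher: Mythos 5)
Your argument is correct, but it takes a genuinely different route from the paper. The paper never compares neighbourhood bases directly: it first invokes Corollary \ref{co2.11} (first countability plus $n$-homotopical Hausdorffness) to conclude that $\pi_n^{wh}(X,x_*)$ is a metric topological group, so that its topology is determined by convergent sequences; it then identifies the null sequences of $\pi_n^{wh}(X,x_*)$ with $L_n(X,x_*)$ via Theorem \ref{th2.3}, and quotes the coordinatewise description of $L_n(X,x_*)$ from the proof of Theorem 2.10 of \cite{1} (a sequence lies in $L_n$ iff all but finitely many of the coordinate classes $[f_k^j]$ are trivial, suitably uniformly in $k$) to match these against the null sequences of the prodiscrete product. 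You instead prove the two continuity statements at the identity by exhibiting explicit inclusions between the two local bases of subgroups: local strong contractibility gives $\Phi\big(\pi_n(j)\pi_n(U,x_*)\big)\subseteq N_F$, and a refined version of the Eda--Kawamura concatenation (taking the blocks indexed by $F$ constant so the representative avoids $\bigcup_{i\in F}(X_i\setminus U_i)$) gives $N_F\subseteq\Phi\big(\pi_n(j)\pi_n(U',x_*)\big)$. What the paper's route buys is brevity given the machinery of Section 4 and a tight link to the $L_n$ theme; what yours buys is independence from the sequential/metric reduction (you do not need to justify that $X$ is $n$-homotopically Hausdorff at $x_*$, which the paper asserts without proof) and a transparent accounting of exactly where local strong contractibility and the shrinking-neighbourhood topology at $x_*$ are used. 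The only part of your write-up that is a sketch rather than a proof is the continuity of the concatenated map $\beta$ and the identification $\Phi([\beta])=(g_i)$, but the argument you indicate (every neighbourhood of $x_*$ contains all but finitely many $X_i$, so only finitely many disks can obstruct continuity, and $r_i\circ\beta\simeq\beta_i$ coordinatewise with injectivity of $\Phi$ doing the rest) is the standard one and closes correctly.
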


\begin{proof}
Using the isomorphism $\pi_n(X, x_*) \cong \prod_{\mathbb{N}} \pi_n(X_i, x_*)$, we can consider elements of $\pi_n(X, x_*)$ by the corresponding ones of $\prod_{i \in \mathbb{N}} \pi_n(X_i, x_*)$. That is $[f] \in \pi_n (X, x_*)$ can be considered as $([f^1], [f^2], \ldots) \in \prod_{i \in \mathbb{N}} \pi_n(X_i, x_*)$, where $f^i = r_i \circ f$ and $r_i :X \to X_i$ is the natural retraction.
Since $X$ is first countable and $n$-homotopically Hausdorff at $x_*$, then $\pi_n^{wh} (X, x_*)$ is a metric topological group by Corollary \ref{co2.11}. Thus, the topology of $\pi_n^{wh}(X, x_*)$ is identified by convergent sequences. By Theorem \ref{th2.3}, the set of convergent sequences to the identity of $\pi_n^{wh}(X, x_*)$ is equal to $L_n(X, x_*)$.
It suffices to verify that $\{([f_k^1], [f_k^2], \ldots)\}_{k \in \mathbb{N}} \in L_n(X, x_*)$ if and only if it converges to the identity in prodiscrete topological group $\prod_{i \in \mathbb{N}} \pi_n(X_i, x_*)$. Let $\{([f_k^1], [f_k^2], \ldots)\}_{\mathbb{N}} \in L_n(X, x_*)$. We must show that for any open set $U$ of the identity in $\prod_{i \in \mathbb{N}} \pi_n(X_i, x_*)$, $([f_k^1], [f_k^2], \ldots) \in U$ for all $k \in \mathbb{N}$ except a finite number.
The elements of the local basis at the identity of $\prod_{i \in \mathbb{N}} \pi_n(X_i, x_*)$  are of the
form $U_i=\{e_1\} \times \{e_2\} \times \cdots \times \{ e_{i-1}\} \times \pi_n(X_i, x_*) \times \pi_n( X_{i+1}, x_*) \times \cdots$, for some $i \in \mathbb{N}$, where $e_1, e_2, \ldots$ are the identity elements of $\pi_n(X_1, x_*), \pi_n(X_2, x_*), \ldots$, respectively.
By \cite[Proof of Theorem 2.10]{1}, $\{([f_k^1], [f_k^2], \ldots)\}_{k \in \mathbb{N}} \in L_n( X, x_*)$ if and only if $[f_k^j]$ is the identity element for all $j \in \mathbb{N}$ except a finite number. Thus, for every $j \in \mathbb{N}$, there exists $K_j \in \mathbb{N}$ such that if $k \ge K_j$, then $[f_k^j] = e_j$. Put $K = \max \{K_1, K_2, \ldots, K_{i-1}\}$. If $k \ge K$, then $[f_k^j] = e_j$, for $j <i$. Therefore, $([f_k^1], [f_k^2], \ldots) \in U_i$ if $k \ge K$. That is $\{([f_k^1], [f_k^2], \ldots)\}_{k \in \mathbb{N}}$ converges to the identity in $\prod_{i \in \mathbb{N}} \pi_n(X_i, x_*)$.

Conversely, let $\{([f_k^1], [f_k^2], \ldots)\}_{k \in \mathbb{N}}$ converges to the identity in $\prod_{i \in \mathbb{N}} \pi_n(X_i, x_*)$. By the form of the local basis at the identity in $\prod_{i \in \mathbb{N}} \pi_n(X_i, x_*)$, $U_i$'s, there exists $K_{i+1} \in \mathbb{N}$ such that if $k \ge K_{i+1}$, then $[f_k^j] = e_j$ for $j  \le i$. Equivalently, $[f_k^i] $'s are identity element except possibly finite numbers $k < K_{i+1}$. Again, by \cite[Proof of Theorem 2.10]{1}, the sequence $\{([f_k^1], [f_k^2], \ldots)\}_{k \in \mathbb{N}}$ belongs to $L_n( X, x_*)$.
\end{proof}

\begin{example}\label{ex2.4}
For the $n$-dimensional Hawaiian earring, $\mathbb{HE}^n$, it is proved that $\pi_n( \mathbb{HE}^n) \cong \prod_{\mathbb{N}} \mathbb{Z}$ \cite[Corollary 1.2]{EdaKaw}.  Then $\pi_n^{wh}(\mathbb{HE}^n, \theta)$ is isomorphic to the prodiscrete topological group of  $\prod_{\mathbb{N}} \mathbb{Z}$.
\end{example}

Theorem \ref{th2.3} shows that there exists a close relation between $L_n(X,x_0)$ and $\pi_n^{wh} (X,x_0)$. In the following theorem, we prove that the structure of $\pi_n^{wh}(X,x_0)$ fixes the structure of $L_n(X,x_0)$. Note that an isomorphism of left topological groups is an isomorphism of groups which is also a homeomorphism on the underlying topological space.
\begin{theorem}\label{th2.3n}
Let $(X, x_0)$ and $(Y, y_0)$ be two pointed spaces and let $n \ge 1$. If $\pi_n^{wh} (X, x_0) \cong \pi_n^{wh} (Y, y_0)$ as left topological groups, then $L_n(X, x_0) \cong L_n(Y, y_0)$.
Moreover, if $X$ and $Y$ have countable local bases at $x_0$ and $y_0$, respectively, and if the isomorphism $L_n(X,x_0) \cong L_n(Y,y_0)$ is induced by some isomorphism $g: \pi_n(X,x_0) \to \pi_n(Y,y_0)$, then $g$ is a homeomorphism.
\end{theorem}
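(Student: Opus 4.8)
The plan is to read everything off the sequential description of $L_n$ supplied by Theorem \ref{th2.3}, combined with the fact that the whisker topology makes $\pi_n$ a left topological group, so that both the group structure on $L_n$ and the continuity of $g$ are governed by sequences converging to the identity.

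For the first assertion, let $\Phi : \pi_n^{wh}(X, x_0) \to \pi_n^{wh}(Y, y_0)$ be an isomorphism of left topological groups, so that $\Phi$ is simultaneously a group isomorphism and a homeomorphism. I would define $\widetilde{\Phi} : \prod_{\aleph_0} \pi_n(X, x_0) \to \prod_{\aleph_0} \pi_n(Y, y_0)$ by applying $\Phi$ in each coordinate; this is plainly a group isomorphism of the full products. Since $\Phi$ is a homeomorphism carrying the identity $e_X$ to the identity $e_Y$, it sends sequences converging to $e_X$ onto sequences converging to $e_Y$, and $\Phi^{-1}$ reverses this. By Theorem \ref{th2.3} these convergent-to-identity sequences are exactly the elements of $L_n(X, x_0)$ and $L_n(Y, y_0)$, so $\widetilde{\Phi}$ restricts to a bijection $L_n(X, x_0) \to L_n(Y, y_0)$; being the restriction of a group isomorphism, it is an isomorphism of groups, which gives $L_n(X, x_0) \cong L_n(Y, y_0)$.

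For the second assertion, I now have a group isomorphism $g : \pi_n(X, x_0) \to \pi_n(Y, y_0)$ whose coordinatewise extension restricts to the isomorphism $L_n(X, x_0) \to L_n(Y, y_0)$, and the aim is to upgrade $g$ to a homeomorphism of the whisker topologies. Because $X$ and $Y$ have countable local bases at $x_0$ and $y_0$, Lemma \ref{le2.3} makes $\pi_n^{wh}(X, x_0)$ and $\pi_n^{wh}(Y, y_0)$ first countable, so continuity at a point may be tested on sequences; and since the whisker topology is a left topological group structure, a homomorphism is continuous as soon as it is continuous at the identity \cite[Proposition 1.3.4, Page 19]{ArhTka}. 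Thus it suffices to show $g$ and $g^{-1}$ are continuous at the identity. Concretely, I would take any sequence converging to $e_X$ in $\pi_n^{wh}(X, x_0)$; by Theorem \ref{th2.3} it lies in $L_n(X, x_0)$, so by hypothesis its coordinatewise image under $g$ lies in $L_n(Y, y_0)$, hence converges to $e_Y$. First countability then yields continuity of $g$ at $e_X$, and therefore continuity of $g$. Running the identical argument for $g^{-1}$, whose coordinatewise extension is the inverse isomorphism $L_n(Y, y_0) \to L_n(X, x_0)$, gives continuity of $g^{-1}$, so $g$ is a homeomorphism.

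The step I expect to be most delicate is the bookkeeping in the second part: one must keep clear the distinction between the abstract isomorphism $g$ on $\pi_n$, which is \emph{not} assumed continuous a priori, and its coordinatewise extension to the products, and verify that the hypothesis ``$g$ induces the isomorphism $L_n(X,x_0) \cong L_n(Y,y_0)$'' is precisely the statement that $g$ carries convergent-to-identity sequences to convergent-to-identity sequences in both directions. Once that dictionary is installed, first countability and the left topological group structure finish the argument; the remaining verifications (that the coordinatewise maps are group homomorphisms and that identities go to identities) are routine.
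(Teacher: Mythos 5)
Your proposal is correct and follows essentially the same route as the paper's own proof: both parts rest on the characterization of $L_n$ from Theorem \ref{th2.3} as the convergent-to-identity sequences, the coordinatewise extension of the isomorphism, and (for the converse) the reduction of continuity to sequential continuity at the identity via Lemma \ref{le2.3} and the left topological group structure. No substantive differences from the paper's argument.
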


\begin{proof}
Let $g: \pi_n^{wh} (X,x_0) \to \pi_n^{wh} (Y,y_0)$ be an isomorphism of left topological groups. Since $g$ is an isomorphism from $\pi_n(X, x_0)$ onto $\pi_n(Y, y_0)$, it induces monomorphisms $\widetilde{g}:L_n(X,x_0) \to \prod_{\aleph_0} \pi_n(Y,y_0)$ and $\widetilde{g^{-1}}: L_n(Y, y_0) \to \prod_{\aleph_0} \pi_n(X,x_0)$ by the rule $\widetilde{g} (\{[\alpha_k]\}_{\aleph_0}) = \{g([\alpha_k])\}_{\aleph_0}$ and $\widetilde{g^{-1}} (\{[\beta_k]\}_{\aleph_0}) = \{g^{-1}([\beta_k])\}_{\aleph_0}$, respectively. We show that $\widetilde{g} (L_n(X,x_0)) \subseteq L_n(Y,y_0)$ and $\widetilde{g^{-1}} (L_n(Y, y_0)) \subseteq L_n(X, x_0)$.
Let $\{[\alpha_k]\}_{\aleph_0} \in L_n(X, x_0)$, then by Theorem \ref{th2.3}, $\{[\alpha_k]\}_{\aleph_0}$ converges to the identity in $\pi_n^{wh} (X, x_0)$. Since $g$ is a continuous homomorphism, $\{g([\alpha_k])\}_{\aleph_0}$ converges to the identity in $\pi_n^{wh}(Y, y_0)$. By Theorem \ref{th2.3}, $\widetilde{g} (\{[\alpha_k]\}_{\aleph_0}) =\{g([\alpha_k])\}_{\aleph_0} \in L_n(Y, y_0)$. Since $\{[\alpha_k]\}_{\aleph_0}$ is an arbitrary element of $L_n(X, x_0)$, it implies that $\widetilde{g} (L_n(X,x_0)) \subseteq L_n(Y,y_0)$. A similar argument can be applied to show that $\widetilde{g^{-1}} (L_n(Y, y_0)) \subseteq L_n(X, x_0)$. Moreover,
\[
\widetilde{g} \circ \widetilde{g^{-1}} \big(\{[\beta_k]\}_{\aleph_0}\big) = \widetilde{g} \big(\{ g^{-1} [\beta_k]\}_{\aleph_0}\big) = \{g \circ g^{-1} [\beta_k]\}_{\aleph_0} = \{[\beta_k]\}_{\aleph_0}.
\]
Hence $\widetilde{g} \circ \widetilde{g^{-1}} = id_{L_n(Y, y_0)}$. Similarly $\widetilde{g^{-1}} \circ \widetilde{g} = id_{L_n(X, x_0)}$. Therefore $\widetilde{g}: L_n(X, x_0) \cong L_n(Y,y_0)$.

Conversely, let $g: \pi_n(X, x_0) \to \pi_n(Y,y_0)$ be the isomorphism inducing $h:L_n(X,x_0) \cong L_n(Y, y_0)$ by the rule $h (\{[\alpha_k]\}_{\aleph_0}) = \{g([\alpha_k])\}_{\aleph_0}$. We must show that $g$ and $g^{-1}$ are continuous. Since $g$ and $g^{-1}$ are homomorphisms and also $\pi_n^{wh}(X, x_0)$ and $\pi_n^{wh}(Y, y_0)$ are left topological groups, $g$ and $g^{-1}$ are continuous if they are continuous at the identities by \cite[Proposition 1.3.4]{ArhTka}. Moreover, since $X$ and $Y$ are first countable at $x_0$ and $y_0$, respectively, $\pi_n^{wh} (X,x_0)$ and $\pi_n^{wh}(Y,y_0)$ are first countable spaces. Thus, to prove continuity of $g$ and $g^{-1}$, it suffices to check sequential continuity at the identities. Let $\{[\alpha_k]\}_{\aleph_0}$ be a sequence converges to the identity in $\pi_n^{wh}(X, x_0)$. By Theorem \ref{th2.3}, $\{[\alpha_k]\}_{\aleph_0} \in L_n(X, x_0)$. Since $h(L_n(X, x_0)) \subseteq L_n(Y, y_0)$, $h$ maps $\{[\alpha_k]\}_{\aleph_0}$ into $L_n(Y, y_0)$. Again by Theorem \ref{th2.3}, $h(\{[\alpha_k]\}_{\aleph_0})$ converges to the identity in $\pi_n^{wh}(Y, y_0)$. Moreover, $h(\{[\alpha_k]\}_{\aleph_0}) = \{g([\alpha_k])\}_{\aleph_0}$. Therefore, $\{g([\alpha_k])\}_{\aleph_0}$ converges to the identity. Since $g$ is a homomorphism, $g$ maps the identity of $\pi_n^{wh}(X, x_0)$ to the identity element of $\pi_n^{wh}(Y, y_0)$. Thus, $g$ is sequentially continuous at the identity as required.
Similarly, by using the inclusion $h^{-1} \big(L_n(Y,y_0)\big) \subseteq L_n(X, x_0)$, one can show that $g^{-1}$ is continuous.
Thus, $g$ and $g^{-1}$ are continuous maps, and hence $g$ is a homeomorphism.
\end{proof}

Let $x_0, x_1 \in X$. If there exists a path $\gamma$ from $x_0$ to $x_1$, then $\gamma_\#$ in Definition \ref{de3.5} induces an isomorphism from $\pi_n(X, x_0)$ onto $\pi_n(X,x_1)$.
But there exist path connected spaces, namely $\mathbb{HE}^n$, $n \ge 2$, such that $L_n (\mathbb{HE}^n, \theta) \not \cong L_n( \mathbb{HE}^n, a)$, where $a \neq \theta$ (see \cite[Corollary 2.11]{1}). By Theorem \ref{th2.3n} and Corollary \ref{co2.7}, $\gamma_\#$ can analogously transfer $L_n(X, x_0)$ isomorphically onto $L_n(X, x_1)$, if $\gamma$ and $\gamma^{-1}$ are $n$-SLT paths.

\begin{corollary}\label{co3.5}
Let $X$ have countable local bases at two points $x_0$ and $x_1$, and $n\ge 1$. If there exists  a path $\gamma$  from $x_0$ to $x_1$, such that $\gamma$ and $\gamma^{-1}$ are $n$-SLT paths, then $\{\Gamma_{\gamma}\}_{\aleph_0}: L_n(X, x_0) \to L_n(X,x_1)$ is an isomorphism.
\end{corollary}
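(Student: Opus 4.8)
The plan is to obtain the desired isomorphism by first upgrading the change-of-basepoint isomorphism $\Gamma_{\gamma}$ to an isomorphism of \emph{left topological} groups, and then feeding this into the first assertion of Theorem \ref{th2.3n}. First I would recall from Theorem \ref{th2.13} that $\Gamma_{\gamma}:\pi_n(X,x_0)\to\pi_n(X,x_1)$ is already a group isomorphism; its inverse is the change-of-basepoint isomorphism $\Gamma_{\gamma^{-1}}$ along the reverse path, since the standard functoriality of $\Gamma$ in the path (up to homotopy) together with $\gamma*\gamma^{-1}\simeq c_{x_0}$ gives $\Gamma_{\gamma^{-1}}\circ\Gamma_{\gamma}=\mathrm{id}$ and $\Gamma_{\gamma}\circ\Gamma_{\gamma^{-1}}=\mathrm{id}$. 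Thus at the purely algebraic level $\Gamma_{\gamma}$ is an isomorphism, and all the work concerns continuity.

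Next I would invoke Proposition \ref{pr2.15} twice, being careful about directions. Because $\gamma^{-1}$ is an $n$-SLT path, Proposition \ref{pr2.15} yields that $\Gamma_{\gamma}:\pi_n^{wh}(X,x_0)\to\pi_n^{wh}(X,x_1)$ is continuous. Applying the same proposition to the reverse path $\gamma^{-1}$ (which runs from $x_1$ to $x_0$), its associated isomorphism $\Gamma_{\gamma^{-1}}$ is continuous if and only if $(\gamma^{-1})^{-1}=\gamma$ is an $n$-SLT path; since $\gamma$ is $n$-SLT by hypothesis, $\Gamma_{\gamma^{-1}}=\Gamma_{\gamma}^{-1}$ is continuous. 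Hence $\Gamma_{\gamma}$ is a homeomorphism as well as a group isomorphism, that is, an isomorphism of left topological groups $\pi_n^{wh}(X,x_0)\cong\pi_n^{wh}(X,x_1)$. For $n\ge 2$ this is exactly the content of Corollary \ref{co2.7}, while the argument above also handles the case $n=1$.

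Finally I would apply the first statement of Theorem \ref{th2.3n} with $(Y,y_0)=(X,x_1)$ and $g=\Gamma_{\gamma}$. That theorem shows the coordinatewise map $\widetilde{g}(\{[\alpha_k]\}_{\aleph_0})=\{g([\alpha_k])\}_{\aleph_0}$ restricts to an isomorphism $L_n(X,x_0)\cong L_n(X,x_1)$, and with $g=\Gamma_{\gamma}$ this induced map is precisely $\{\Gamma_{\gamma}\}_{\aleph_0}$, which is the claimed isomorphism. I would note that the countable local bases are not strictly needed for this algebraic isomorphism, as the first part of Theorem \ref{th2.3n} uses only that $\Gamma_{\gamma}$ is a continuous homomorphism with continuous inverse, via the characterization of $L_n$ as the convergent sequences to the identity (Theorem \ref{th2.3}); they are relevant only if one wants the stronger homeomorphism conclusion.

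The only genuinely delicate point, and the step I would treat most carefully, is the bookkeeping of directions: one must match each $n$-SLT hypothesis to the correct continuity statement, so that $\gamma^{-1}$ being $n$-SLT secures continuity of $\Gamma_{\gamma}$ while $\gamma$ being $n$-SLT secures continuity of the inverse, and one must verify that $\Gamma_{\gamma}^{-1}$ really coincides with $\Gamma_{\gamma^{-1}}$ so that both hypotheses are genuinely used. Once the orientation conventions of Proposition \ref{pr2.15} are pinned down, the conclusion follows immediately from Theorems \ref{th2.13}, \ref{th2.3}, and \ref{th2.3n}.
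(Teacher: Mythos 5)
Your proposal is correct and follows exactly the route the paper intends: the remark preceding Corollary \ref{co3.5} derives it from Theorem \ref{th2.3n} together with Corollary \ref{co2.7} (i.e., Proposition \ref{pr2.15} applied to both $\gamma$ and $\gamma^{-1}$), which is precisely your argument, including the correct matching of each $n$-SLT hypothesis to the continuity of $\Gamma_{\gamma}$ and of $\Gamma_{\gamma}^{-1}=\Gamma_{\gamma^{-1}}$. Your observations that the case $n=1$ is covered by arguing directly through Proposition \ref{pr2.15} rather than Corollary \ref{co2.7}, and that the countable local bases are not needed for the first (algebraic) part of Theorem \ref{th2.3n}, are both accurate.
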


By Corollary \ref{co3.5}, if $\varphi: \mathcal{H}_n(X, x_0)\to L_n(X, x_0)$ is injective, and $\gamma$ and $\gamma^{-1}$ are $n$-SLT paths, then $\{\Gamma_{\gamma}\}_{\aleph_0}$ induces an isomorphism from $\mathcal{H}_n(X, x_0)$ onto $\mathcal{H}_n(X, x_1)$. For instance, on semilocally $n$-simply connected spaces, we have such an isomorphism.

The harmonic archipelago, $\mathbb{HA}$, is a non-simply connected space with small loops. The fundamental group and homology groups of the harmonic archipelago were studied in \cite{ConHoj} and \cite{KarRep3}, respectively. Here, we recall some of their results to use in Example \ref{ex4.12n}.

\begin{figure}[!ht]
\centering
\includegraphics[scale=.4]{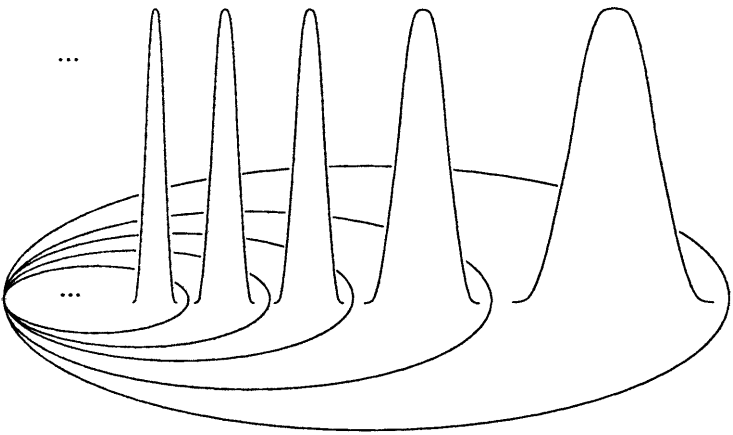}
\end{figure}

\begin{theorem}[\cite{ConHoj, KarRep3}]\label{th4.12n}
Let $\times^{\sigma}$ denote the free $\sigma$-product of a family of groups, and $\overline{H}^N$ denote the normal closure of the subgroup $H$ in a given group. Then\\
$(i)$ \cite[Theorem 5]{ConHoj}
\[\pi_1 (\mathbb{HA}) \cong \frac{\times^{\sigma}_{\aleph_0} \mathbb{Z}}{\overline{*_{\aleph_0} \mathbb{Z}}^N}.
\]
$(ii)$ \cite[Theorem 1.2 and Proposition 2.4]{KarRep3}. Let $P$ be the set of all prime numbers. Then
\[
H_1 (\mathbb{HA}) \cong \frac{\prod_{\aleph_0} \mathbb{Z}}{\sum_{\aleph_0} \mathbb{Z}} \cong \left( \prod_{p\in P} A_p \right) \oplus \left( \sum_{c} \mathbb{Q} \right),
\]
where $A_p$ is the $p$-adic completion of the direct sum of $p$-adic integers $\sum_{c} \mathbb{J}_p$, and $c$ denotes the continuum cardinal.
\end{theorem}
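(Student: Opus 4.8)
The plan is to treat the two isomorphisms separately, both resting on the standard description of the fundamental group of the $1$-dimensional Hawaiian earring as an infinitary free product: one has $\pi_1(\mathbb{HE}^1, \theta) \cong \times^{\sigma}_{\aleph_0} \mathbb{Z}$, where an element is a reduced (possibly infinite) word in generators $x_1, x_2, \ldots$ corresponding to the circles $c_1, c_2, \ldots$. First I would fix $\mathbb{HA}$ concretely as $\mathbb{HE}^1$ together with, for each $n$, a surface (a ``mountain'') spanning the region between $c_n$ and $c_{n+1}$ and rising to a height bounded away from $\theta$. The inclusion $\mathbb{HE}^1 \hookrightarrow \mathbb{HA}$ then induces the comparison homomorphism $\Phi : \pi_1(\mathbb{HE}^1,\theta) \to \pi_1(\mathbb{HA},\theta)$ whose analysis will give part $(i)$.

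For part $(i)$ I would show $\Phi$ is surjective, since attaching simply connected fillings adds no new generators and every loop in $\mathbb{HA}$ is homotopic into $\mathbb{HE}^1$, and then identify $\ker \Phi$. The easy inclusion $\overline{*_{\aleph_0}\mathbb{Z}}^N \subseteq \ker \Phi$ comes from one explicit homotopy: each mountain gives $c_n \simeq c_{n+1}$, and sliding a loop successively over all the mountains while its image converges to $\theta$ as $t \to 1$ contracts every finite loop $c_n$, hence every element of the finitary free product $*_{\aleph_0}\mathbb{Z}$, to the constant loop. The reverse inclusion $\ker \Phi \subseteq \overline{*_{\aleph_0}\mathbb{Z}}^N$ is the main obstacle: one must prove the surfaces impose no relation beyond the finite words, so that no nontrivial infinite word (such as $x_1 x_2 x_3 \cdots$) dies. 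Here the finitary van Kampen theorem is useless, and I would instead invoke the rigidity of homomorphisms out of the $\sigma$-product to realize $\times^{\sigma}_{\aleph_0}\mathbb{Z}/\overline{*_{\aleph_0}\mathbb{Z}}^N$ as the target and check that $\Phi$ factors through an isomorphism onto $\pi_1(\mathbb{HA},\theta)$, as in \cite{ConHoj}.

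For part $(ii)$ I would abelianize $(i)$: since $H_1 = \pi_1^{\mathrm{ab}}$, the group $H_1(\mathbb{HA})$ is the abelianization of $\times^{\sigma}_{\aleph_0}\mathbb{Z}$ modulo the subgroup generated by the finite words. The winding-number map sends this onto $\prod_{\aleph_0}\mathbb{Z}$ and carries $*_{\aleph_0}\mathbb{Z}$ to the finite-support sequences $\sum_{\aleph_0}\mathbb{Z}$, inducing a natural map $H_1(\mathbb{HA}) \to \prod_{\aleph_0}\mathbb{Z}/\sum_{\aleph_0}\mathbb{Z}$; the first task is to show this is an isomorphism, i.e. that the extra divisible summand of $H_1(\mathbb{HE}^1)$ beyond $\prod_{\aleph_0}\mathbb{Z}$ is absorbed once the finite words are killed. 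The second and principal obstacle is then the purely algebraic identification of this Specker-type quotient with $\big(\prod_{p\in P} A_p\big) \oplus \big(\sum_c \mathbb{Q}\big)$: I would split off the continuum-dimensional divisible part $\sum_c \mathbb{Q}$ and show the reduced complement is cotorsion and $p$-adically complete, realizing it for each prime $p$ as the $p$-adic completion $A_p$ of $\sum_c \mathbb{J}_p$. I would carry out this last step through the standard structure theory of $\prod_{\aleph_0}\mathbb{Z}/\sum_{\aleph_0}\mathbb{Z}$, as in \cite{KarRep3}, rather than by direct manipulation of sequences.
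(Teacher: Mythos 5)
This is a quoted result, not one the paper proves: Theorem \ref{th4.12n} is recalled verbatim from \cite[Theorem 5]{ConHoj} and \cite[Theorem 1.2 and Proposition 2.4]{KarRep3} solely for use in Example \ref{ex4.12n}, so there is no in-paper proof to compare your attempt against. Your outline is a faithful summary of how those sources actually argue: for $(i)$, starting from Eda's identification $\pi_1(\mathbb{HE}^1,\theta)\cong\times^{\sigma}_{\aleph_0}\mathbb{Z}$, the surjectivity of the comparison map, the telescoping homotopy $c_n\simeq c_{n+1}\simeq\cdots$ contracting each generator (hence the normal closure of $*_{\aleph_0}\mathbb{Z}$), and the rigidity of homomorphisms out of the free $\sigma$-product to pin down the kernel; for $(ii)$, abelianizing and then invoking the Balcerzyk-type decomposition of $\prod_{\aleph_0}\mathbb{Z}/\sum_{\aleph_0}\mathbb{Z}$.

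That said, as a self-contained proof your proposal has gaps exactly at the two places you yourself flag as the principal obstacles, and in both cases you resolve them by citing the very papers whose results you are proving. In $(i)$, the inclusion $\ker\Phi\subseteq\overline{*_{\aleph_0}\mathbb{Z}}^N$ is the entire content of \cite[Theorem 5]{ConHoj}; ``invoke the rigidity of homomorphisms out of the $\sigma$-product'' names the right tool but supplies none of the word-theoretic analysis that shows no infinite word such as $x_1x_2x_3\cdots$ is killed. In $(ii)$, you correctly note that $\mathrm{Ab}(\times^{\sigma}_{\aleph_0}\mathbb{Z})$ is strictly larger than $\prod_{\aleph_0}\mathbb{Z}$ (it has an extra divisible summand), so showing that this excess is ``absorbed'' after killing the finite words is a genuine step, not a formality, and the identification of $\prod_{\aleph_0}\mathbb{Z}/\sum_{\aleph_0}\mathbb{Z}$ with $\bigl(\prod_{p\in P}A_p\bigr)\oplus\bigl(\sum_c\mathbb{Q}\bigr)$ is again deferred wholesale. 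Since the paper treats the theorem as an external citation, deferring to \cite{ConHoj} and \cite{KarRep3} is exactly what the authors do too; just be aware that what you have written is an annotated citation rather than a proof.
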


Example \ref{ex4.12n} illustrates that Corollary \ref{co3.5} does not hold if there is no such path between the points.

\begin{example}\label{ex4.12n}
Let $\mathbb{HA}$ be the harmonic archipelago space, and $\theta$ be the origin.

Let $a \in \mathbb{HA}$ and $a \neq \theta$. Then by Corollary \ref{co3.3n}, $L_1(\mathbb{HA}, a) = \prod_{\aleph_0}^W \pi_1(\mathbb{HA}, a)$ and $L_1(\mathbb{HA}, \theta) = \prod_{\aleph_0} \pi_1(\mathbb{HA}, \theta)$. By Theorem \ref{th4.12n} (i) since $\pi_1(\mathbb{HA}) \cong \frac{\times^{\sigma}_{\aleph_0} \mathbb{Z}}{\overline{*_{\aleph_0} \mathbb{Z}}^N}$, we have
\[
L_1(\mathbb{HA}, a) \cong {\prod_{\aleph_0}}^W \frac{\times_{\aleph_0}^{\sigma} \mathbb{Z}}{\overline{*_{\aleph_0} \mathbb{Z}}^N}, \quad
L_1(\mathbb{HA}, \theta) \cong \prod_{\aleph_0} \frac{\times_{\aleph_0}^{\sigma} \mathbb{Z}}{\overline{*_{\aleph_0} \mathbb{Z}}^N}.
\]
We prove that $L_1(\mathbb{HA}, a) \not \cong L_1(\mathbb{HA}, \theta)$.
By contrary, assume that $L_1(\mathbb{HA}, a) \cong L_1(\mathbb{HA}, \theta)$.  Thus, their  abelianizations must be isomorphic.
That is $Ab(\prod_{\aleph_0}^W \frac{\times_{\aleph_0}^{\sigma} \mathbb{Z}}{\overline{*_{\aleph_0} \mathbb{Z}}^N}) \cong \sum_{\aleph_0} Ab(\frac{\times_{\aleph_0}^{\sigma} \mathbb{Z}}{\overline{*_{\aleph_0} \mathbb{Z}}^N}) \cong Ab (\prod_{\aleph_0} \frac{\times_{\aleph_0}^{\sigma} \mathbb{Z}}{\overline{*_{\aleph_0} \mathbb{Z}}^N})$.
Let $G = \prod_{\aleph_0} \frac{\times_{\aleph_0}^{\sigma} \mathbb{Z}}{\overline{*_{\aleph_0} \mathbb{Z}}^N}$. Then $G$ is  the fundamental group of the countably infinite product of copies of the
harmonic archipelago. Thus $G$ is the fundamental group of a space $X$ in which each
based loop has arbitrarily small representatives. Then by \cite[Theorem 4]{HerHoj}, we know $G$ satisfies the property of being Higman-complete. Moreover, the first singular
homology $H_1(X)$ is isomorphic to the abelianization of $G$.
Since we are assuming that $G$ is isomorphic to the weak direct
product $\prod_{\aleph_0}^W \frac{\times_{\aleph_0}^{\sigma} \mathbb{Z}}{\overline{*_{\aleph_0} \mathbb{Z}}^N}$ and the abelianization of a weak direct product can be computed
coordinatewise, we get that the abelianization of $G$ is isomorphic to $\sum_{\aleph_0} \bigg(\big( \prod_{p\in P} A_p \big) \oplus \big( \sum_{c} \mathbb{Q} \big)\bigg)$. In particular, the abelianization of $G$ is torsion-free. Then by \cite[Corollary 5]{HerHoj},
since $Ab(G) \cong H_1(X)$ is torsion-free it must be
algebraically compact. Now $\sum_{\aleph_0} \big( \prod_{p\in P} A_p \big)$ is algebraically compact as a direct summand
of the algebraically compact abelian group $Ab(G)$.
Moreover,
the group $A_p$ is the $p$-adic completion of $\sum_{c} \mathbb{J}_p$, and thus it is complete in $p$-adic topology. By \cite[p. 163, Remark]{Fuc}, since $p$-adic topology is coarser than $\mathbb{Z}$-adic topology, $A_p$ is reduced algebraically compact. By \cite[p. 101, Exercise 5]{Fuc}, a direct sum or a direct product of groups is reduced if and only if every component is reduced. Therefore, $\sum_{\mathbb{N}} \prod_{ p \in P} A_p$ is reduced algebraically compact. By
\cite[p. 163, Theorem 19.1]{Fuc}, a group is complete in the $\mathbb{Z}$-adic topology if and only if it is reduced algebraically compact.
Thus, $\sum_{\mathbb{N}} \prod_{ p \in P} A_p$ is complete in $\mathbb{Z}$-adic topology. Also, by \cite[p. 166, Corollary 39.10]{Fuc} if $A = \sum_{i \in I} C_i$ is a direct decomposition of a complete group $A$, then all the $C_i$ are complete groups, and there is an integer $n >0$ such that $n C_i =0$ for almost all $i \in I$.
Hence, there is an integer $n >0$ such that $n \prod_{ p \in P} A_p = 0$. It is equivalent to $\prod_{ p \in P} A_p$ being torsion group, which is a contradiction. Therefore, there is no isomorphism from $L_1( \mathbb{HA}, a)$ onto $L_1( \mathbb{HA}, \theta)$.

Note that $\pi_1^{wh}(\mathbb{HA}, a)$ is isomorphic to the discrete topological group $\frac{\times^{\sigma}_{\aleph_0} \mathbb{Z}}{\overline{*_{\aleph_0} \mathbb{Z}}^N}$, and $\pi_1^{wh}(\mathbb{HA}, \theta)$ is isomorphic to indiscrete one. Hence,
 there is no isomorphism of left topological groups from $\pi_1^{wh}(\mathbb{HA}, a)$ onto $\pi_1^{wh}(\mathbb{HA}, \theta)$, but one can not deduce that $L_1(\mathbb{HA}, a) \not \cong L_1(\mathbb{HA}, \theta)$.
\end{example}

\acknowledgment{The authors would like to thank the referees for their careful reading and useful comments and suggestions that helped to improve the article.} 


\begin{thebibliography}{99}

\bibitem{paper4}
M. Abdullahi Rashid, N. Jamali, B. Mashayekhy, S.Z. Pashaei and H. Torabi, {\it On subgroup topologies on fundamental groups}, to appear in Hacettepe J. Math. Stat.

\bibitem{ArhTka}
A. Arhangegel'skii and M. Tkachenko, {\em Topological Groups and Related Structures}, Atlantis Press, Amsterdam, 2008.

\bibitem{1} A. Babaee, B. Mashayekhy and H. Mirebrahimi, {\it On Hawaiian groups of some topological spaces}, Topology Appl.  \textbf{159} (8), 2043-2051, 2012.

\bibitem{BogSie}
W.A. Bogley and A.J. Sieradski, {\it Universal path spaces}, http: // people. oregonstate. edu/ ~bogleyw/ research/ ups.pdf.

\bibitem{BroDyd2}
N. Brodskiy, J. Dydak, B. Labuz and A. Mitra, {\it Covering maps for locally path connected spaces}, Fund. Math.  \textbf{218}, 13-46, 2012.

\bibitem{BroDyd}
N. Brodskiy, J. Dydak, B. Labuz and A. Mitra, {\it Topological and uniform structures on universal covering spaces}, arXiv:1206.0071, 2012.

\bibitem{ConHoj}
G.R. Conner, W. Hojka and M. Meilstrup, {\it Archipelago groups}, Proc. Amer. Math. Soc. \textbf{143}, 4973-4988, 2015.

\bibitem{ConLam}
G.R. Conner and J. Lamoreaux, {\it On the existence of universal covering spaces for metric
spaces and subsets of the Euclidean plane}, Fund. Math. \textbf{187}, 95-110, 2005.

\bibitem{EdaKaw}
K. Eda and K. Kawamura, {\it Homotopy and homology groups of the $n$-dimensional Hawaiian Earring},
Fund. Math.  \textbf{165} (1), 17-28, 2000.

\bibitem{Fab}
P. Fabel, {\it Multiplication is discontinuous in the Hawaiian Earring droup (with the Quotient
Topology)}, Bull. Pol. Acad. Sci. Math. \textbf{59} (1), 77-83, 2011.

\bibitem{FisZas}
H. Fischer and A. Zastrow, {\it Generalized universal coverings and the shape group}, Fund. Math. \textbf{197}, 167-196, 2007.

\bibitem{Fuc}
L. Fuchs, {\em Infinite Abelian Groups I}, Academic Press, New York, 1970.

\bibitem{Ghane}
 F.H. Ghane, Z. Hamed, B. Mashayekhy and H. Mirebrahimi, {\it On topological homotopy groups of $n$-Hawaiian like spaces}, Topology Proceedings  \textbf{36}, 255--266, 2010.

\bibitem{GhaMas}
H. Ghane, Z. Hamed, B. Mashayekhy, and H. Mirebrahimi, {\it Topological
homotopy groups}, Bull. Belg. Math. Soc. Simon Stevin \textbf{15} (3), 455-464, 2008.

\bibitem{HerHoj}
Herfort and Hojka, {\it Cotorsion and wild homology}, Israel J. Math. \textbf{221},  275-290, 2017.

\bibitem{paper3}
N. Jamali, B. Mashayekhy, H. Torabi, S.Z. Pashaei and M. Abdullahi Rashid,
{\it On topologized fundamental groups with small loop transfer viewpoints}, Acta Math. Vietnamica \textbf{43}, 1-27, 2018.

\bibitem{KarRep}
U.H. Karimov and  D. Repov\v{s}, {\it Hawaiian groups of topological spaces (Russian)},  Uspekhi. Mat.
Nauk.  \textbf{61} (5), 185--186, 2006; transl. in Russian Math. Surv. \textbf{61} (5), 987--989, 2006.

\bibitem{KarRep3}
U.H. Karimov and  D. Repov\v{s}, {\it On the homology of the Harmonic archipelago}, Central European J. Math. \textbf{10}, 863--872, 2012.

\bibitem{PasMas}
S.Z. Pashaei, B. Mashayekhy, H. Torabi and M. Abdullahi Rashid,
{\it Small loop transfer spaces with respect to subgroups of fundamental groups}, Topology Appl. \textbf {232}, 242-255, 2017.

\bibitem{PasGha}
H. Passandideh, F.H. Ghane and Z. Hamed, {\it On the homotopy groups of separable metric spaces}, Topology Appl.  \textbf{158}, 1607-1614, 2011.

\bibitem{Spa}
E.H. Spanier, {\em Algebraic Topology}, McGraw-Hill, New York, 1966.

\bibitem{Zim}
B. Zimmermann-Huisgen, {\it On Fuchs' problem 76},  J. Reine Angew. Math. \textbf{309}, 86-91, 1979.

\end{thebibliography}
\end{document}